\definecolor{applegreen}{rgb}{0.55, 0.71, 0.0}
\setlist{noitemsep}
\setlist[enumerate,1]{label=(\arabic*)}
\tikzstyle{snode}=[circle ,draw=black,fill=white,thick, inner sep=0pt ,minimum size=1.3mm]
\tikzstyle{bnode}=[circle ,draw=black,fill=black,thick, inner sep=0pt ,minimum size=1.3mm]
\DeclareMathOperator*{\Aut}{Aut}
\newtheorem{theorem}{Theorem}
\newtheorem{observation}[theorem]{Observation}
\newtheorem{proposition}[theorem]{Proposition}
\newtheorem{lemma}[theorem]{Lemma}
\newtheorem{corollary}[theorem]{Corollary}
\newcommand*{\claimproof}{Proof of the Claim}
\title{On the distinguishing chromatic number \\ in hereditary graph classes\thanks{An extended abstract announcing the results presented in this paper has been published in the Proceedings of~Eurocomb’25.}}
\author[1,2]{Christoph Brause}
\author[2]{Rafa{\l} Kalinowski}
\author[2]{Monika Pil\'sniak}
\author[1,2]{Ingo Schiermeyer}
\affil[1]{\normalsize TU Bergakademie Freiberg, 09596 Freiberg, Germany}
\affil[2]{\normalsize AGH University of Krakow,  30-059 Krak\'ow, Poland}
\date{\today}
\begin{document}

\maketitle
\begin{abstract}
The distinguishing chromatic number of a graph $G$, denoted $\chi_D(G)$, is the minimum number of colours in a proper vertex colouring of $G$ that is preserved by the identity automorphism only. Collins and Trenk proved that $\chi_D(G)\le 2\Delta(G)$ for any connected graph $G$, and the equality holds for complete balanced bipartite graphs $K_{p,p}$ and for $C_6$.  In this paper, we show that the upper bound on $\chi_D(G)$ can be substantially reduced if we forbid some small graphs as induced subgraphs of $G$, that is, we study the distinguishing chromatic number in some hereditary graph classes.

\medskip
\noindent
{\bf Keywords:} Automorphism, distinguishing chromatic number, $H$-free graphs.

\smallskip

\noindent {\bf AMS Subject Classification: 05C15, 05E18}

\end{abstract}


\renewcommand\baselinestretch{1.2}\normalsize

\section{Introduction}

Let $G$ be a graph and $c\colon V(G)\to \mathbb{N}$ be a proper vertex colouring. 
An \emph{automorphism} with respect to the pair $(G,c)$ is a bijective mapping $\varphi\colon V(G)\to V(G)$ such that $c(v)=c(\varphi(v))$ for each $v\in V(G)$, and $vw\in E(G)$ if and only if $\varphi(v)\varphi(w)\in E(G)$ for each $v,w\in V(G)$.
The set of automorphisms with respect to $(G,c)$ is denoted by ${\rm Aut}(G,c)$.
A vertex of $G$ is \emph{fixed} if it is a fixed point of every automorphism of $\Aut(G,c)$.

A proper vertex colouring $c$ of $G$ is \emph{distinguishing} if it fixes every vertex of $G$. The {\it distinguishing chromatic number} of a graph $G$, denoted $\chi_D(G)$, is the minimum number of colours in a proper distinguishing vertex colouring. This concept was introduced by Collins and Trenk~\cite{CT}, who proved the following tight upper bound.
\begin {theorem} {\rm (Collins, Trenk \cite{CT})}
If $G$ is a connected graph with maximum degree $\Delta$, then $$\chi_D(G)\le 2\Delta.$$
with equality if and only if $G\cong K_{\Delta,\Delta}$ or $G\cong C_6$.
\end{theorem}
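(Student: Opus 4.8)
The plan is to prove the inequality first and treat the equality cases separately. For the bound $\chi_D(G)\le 2\Delta$ I would dispose of $\Delta\le 2$ by hand (there $G$ is a path or a cycle, and a proper distinguishing colouring with at most $4$ colours is easy to exhibit) and concentrate on $\Delta\ge 3$. The guiding observation is that properness alone needs at most $\Delta+1$ colours, so a palette of $2\Delta$ colours carries a slack of about $\Delta$ colours that can be spent on breaking symmetries. Concretely, I would root $G$ at a vertex $r$, fix a BFS ordering $v_1=r,v_2,\dots,v_n$ in which every $v_i$ with $i\ge 2$ has a neighbour among $v_1,\dots,v_{i-1}$, and colour greedily: when $v_i$ is reached, at most $\Delta$ colours are forbidden by its already-coloured neighbours, so at least $\Delta$ colours remain available.

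The symmetry-breaking is driven by two devices. First, I give $r$ a colour used by no other vertex; since every $\varphi\in\Aut(G,c)$ preserves colours, this forces $\varphi$ to fix $r$, and hence to preserve each BFS layer $L_0=\{r\},L_1,L_2,\dots$ setwise (automorphisms preserve $\dist(\cdot,r)$). Second, processing the layers in order, I would maintain the invariant that all vertices of the already-coloured layers are fixed by every automorphism. To pass from $L_{\le i}$ to $L_{i+1}$, note that a colour-preserving $\varphi$ fixing $L_{\le i}$ pointwise can map $w\in L_{i+1}$ to $w'\in L_{i+1}$ only when $w,w'$ share the same colour and the same neighbourhood in the (pointwise fixed) set $L_{\le i}$; so it suffices to give distinct colours to vertices of $L_{i+1}$ that agree on this neighbourhood \emph{signature}. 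A short degree count shows each such signature class has size at most $2\Delta-1-s$, where $s$ is the size of the common neighbourhood, which is exactly the number of colours left after excluding the $s$ forbidden colours and reserving the unique colour of $r$; hence the required distinct-colour assignment always exists. Carrying the induction to the last layer yields a distinguishing proper colouring with at most $2\Delta$ colours, and one checks that it is precisely on $K_{\Delta,\Delta}$ that the palette is exhausted.

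For the equality statement, the easy direction is to verify that the two extremal graphs attain the bound. In $K_{\Delta,\Delta}$ the two sides $A,B$ are classes of mutually adjacent twins, so in any proper distinguishing colouring the vertices within a side receive pairwise distinct colours (otherwise the transposition of two equally coloured twins is a non-trivial element of $\Aut(G,c)$), while adjacency across the complete bipartite graph forces $A$ and $B$ to use disjoint colour sets; hence $2\Delta$ colours are necessary and $\chi_D(K_{\Delta,\Delta})=2\Delta$. For $C_6$, where $2\Delta=4$, I would check by a short finite analysis over the proper colourings with at most $3$ colours that each of them retains a non-trivial symmetry of the dihedral group, while an explicit $4$-colouring (for instance giving two antipodal vertices unique colours and arranging the remaining two colour classes asymmetrically) is distinguishing.

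The hard direction, and the \textbf{main obstacle}, is to show that $K_{\Delta,\Delta}$ and $C_6$ are the \emph{only} graphs attaining equality, i.e.\ that every other connected graph of maximum degree $\Delta$ satisfies $\chi_D(G)\le 2\Delta-1$. This is a stability result that refines the upper bound by one colour, and I expect it to require a delicate case analysis showing that, away from the two extremal graphs, the construction above can always economise one colour. The natural places to save a colour are: a vertex of degree strictly below $\Delta$ (which loosens the tight count in a signature class), the failure of the twin structure that forced disjoint palettes in $K_{\Delta,\Delta}$, or the extra room granted by Brooks' theorem, which yields a proper $\Delta$-colouring whenever $G$ is neither complete nor an odd cycle. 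Pinning down exactly which structural features force the loss of a colour, and verifying that their simultaneous absence characterises precisely $K_{\Delta,\Delta}$ and $C_6$, is where the real work lies.
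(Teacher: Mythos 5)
This is a theorem the paper only cites (Collins--Trenk), so there is no in-paper proof to compare against; I am judging your argument on its own merits. The upper bound $\chi_D(G)\le 2\Delta$ via a BFS layering, a uniquely coloured root, and distinct colours within each ``same neighbourhood in the fixed layers'' class is a workable strategy, but your counting has a soft spot: when you assign distinct colours inside a signature class $W$ with common neighbourhood $N$ of size $s$, you only exclude the $s$ colours on $N$ and the root's colour, ignoring that a vertex of $W$ may be adjacent to \emph{already coloured} vertices of the same layer lying outside $W$, each of which forbids a further colour for that vertex individually. The statement is rescuable: for $i\ge 1$ any $v\in N\subseteq L_i$ has a neighbour in $L_{i-1}$, so $|W|\le \Delta-1$, while each $w\in W$ has at most $\Delta-s$ neighbours outside $N$, leaving a list of at least $(2\Delta-1-s)-(\Delta-s)=\Delta-1\ge |W|$ colours, so a greedy distinct assignment within $W$ succeeds; the layer $L_1$ is a single class handled directly. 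You should make this repair explicit, since as written ``the required distinct-colour assignment always exists'' does not follow from the count you give.

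The genuine gap is the characterisation of equality. Verifying that $K_{\Delta,\Delta}$ and $C_6$ attain $2\Delta$ is done correctly (non-adjacent twins force distinct colours on each side, completeness between the sides forces disjoint palettes), but the converse --- that \emph{every other} connected graph satisfies $\chi_D(G)\le 2\Delta-1$ --- is the substantive half of the theorem, and you explicitly leave it as ``where the real work lies'' with only a list of candidate places to save a colour. Naming possible sources of slack (a vertex of degree below $\Delta$, failure of the twin structure, Brooks' theorem) is not a proof; the stability analysis showing that the simultaneous absence of all such savings forces $G\cong K_{\Delta,\Delta}$ or $G\cong C_6$ is precisely the hard content, and it is missing. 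As it stands the proposal establishes the inequality (after the patch above) and the ``if'' direction of the equality statement, but not the full theorem.
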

This concept spawned numerous papers and results. For example, Collins and Trenk showed that $\chi_D(G)= |V(G)|$ if and only if $G$ is a complete multipartite graph. Furthermore, Cavers and Seyffarth~\cite{CS} characterized graphs $G$ with $\chi_D(G)\ge |V(G)|-2$. Laflamme and Seyffarth~\cite{LS} showed that $K_{\Delta,\Delta-1}$ is the only bipartite graph $G$ with $\chi_D(G)=2\Delta(G)-1$. Fijav\v z, Negami, and Sano~\cite{FNS} proved a notably interesting result that $\chi_D(G)\le 5$ for every 3-connected planar graph $G\notin\{K_{2,2,2},C_6+2K_1\}$. Balachandran, Padinhatteer, and Spiga \cite{BPS} found an infinite family of Cayley graphs $G$ with $\chi_D(G)>\chi(G)$ and relatively small automorphism group.

We motivate our paper from a result by Cranston~\cite{Crans}. He confirmed a conjecture of Collins and Trenk by showing that a connected graph $G\not\cong C_6$ of girth at least five satisfies $\chi_D(G)\leq \Delta(G)+1$. This result can be translated into a forbidden induced subgraph setting.

Given a family of graphs $H_1,\ldots,H_k$, a graph $G$ is {\it $(H_1,\ldots,H_k)$-free} if none of the graphs $H_1,\ldots,H_k$ is an induced subgraph of $G$. If $k=1$ and $H_1=H$, we write for ease that $G$ is $H$-free. In terms of forbidden induced subgraphs, the above mentioned result of Cranston reads as follows. 

\begin{theorem}\label{introduction:theorem:Cranston} {\rm (Cranston \cite{Crans})}
If $G$ is a connected $(C_3,C_4)$-free graph, then 
\[\chi_D(G)\leq \Delta(G)+1\] unless $G\cong C_6.$
\end{theorem}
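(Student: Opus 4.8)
The plan is to split on the maximum degree. When $\Delta\le 2$ the graph $G$ is a path or a cycle, and since $G$ is $(C_3,C_4)$-free it is some $P_n$ or some $C_n$ with $n\ge 5$; I would dispose of these by direct inspection. A path has only its reflection as a nontrivial automorphism, which three colours easily destroy, and for cycles the known values $\chi_D(C_n)\le 3$ for $n\ge 5$, $n\ne 6$, together with $\chi_D(C_6)=4$, give exactly the stated bound and single out the unique exception $G\cong C_6$. From now on I assume $\Delta\ge 3$.

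The first payoff of $\Delta\ge 3$ and girth at least $5$ is a spare colour: $G$ is connected, neither complete nor an odd cycle, so Brooks' theorem gives $\chi(G)\le\Delta$ and leaves colour $\Delta+1$ available purely to break symmetries. I would then lean on the two elementary consequences of forbidding $C_3$ and $C_4$: every neighbourhood $N(v)$ is independent, and any two distinct vertices have at most one common neighbour. The second fact is the engine of the argument, since it means a vertex is almost pinned down by the colours occurring in its immediate surroundings.

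Concretely, I would fix a root $r$ (say of maximum degree), build the distinguishing colouring on the palette $\{1,\dots,\Delta+1\}$, and give $r$ the spare colour $\Delta+1$ so that it becomes the unique vertex of its colour; this forces $\varphi(r)=r$ for every colour-preserving automorphism $\varphi$. Running a BFS from $r$, the map $\varphi$ then preserves $\dist(r,\cdot)$ and hence preserves each layer $L_0,L_1,\dots$ setwise, and I would prove by induction on $i$ that $\varphi$ fixes $L_i$ pointwise. In the inductive step, a vertex $v\in L_{i+1}$ with a neighbour $p\in L_i$ must map to a same-coloured neighbour of $\varphi(p)=p$ inside $L_{i+1}$; the colours along the layers must therefore be arranged so that no such permutation of the children of a fixed vertex is possible. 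Here the bound of at most one common neighbour guarantees that two children of $p$ cannot be interchanged once their deeper colour environments differ, and where two children genuinely share a colour I would break the tie by descending one more level.

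The main obstacle is that $\Delta+1$ colours are far too few to make every closed neighbourhood rainbow, since that would amount to properly colouring $G^2$ and could cost up to $\Delta^2+1$ colours; hence I cannot simply demand that siblings always receive distinct colours. The delicate part is thus the layer-by-layer colour assignment, which must at once remain proper, accommodate the vertices carrying several BFS-parents and the intra-layer edges that girth $5$ permits from distance two onward, and still leave enough local asymmetry for the at-most-one-common-neighbour property to finish fixing each vertex. Managing this bookkeeping, rather than the overall scheme, is where I expect the real work to lie.
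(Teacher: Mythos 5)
This theorem is not proved in the paper at all: it is Cranston's result, quoted from \cite{Crans} after translating ``girth at least five'' into the equivalent condition of being $(C_3,C_4)$-free, so there is no internal proof to compare yours against. Judged on its own, your proposal is a plan rather than a proof, and the gap sits exactly where you locate it yourself. The base case $\Delta\le 2$ and the general frame --- Brooks' theorem supplying a spare colour, a fixed root, BFS layers preserved setwise by any colour-preserving automorphism, induction along the layers using the fact that two vertices share at most one common neighbour --- are sound and form the natural skeleton. But the entire content of Cranston's theorem is the step you defer as ``bookkeeping'': exhibiting an assignment of colours to each layer that is simultaneously proper, uses only $\Delta+1$ colours, and leaves no two children of an already-fixed vertex interchangeable. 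Saying that ``the colours along the layers must be arranged so that no such permutation of the children of a fixed vertex is possible'' restates the goal; it does not achieve it. In particular, when two children of a fixed $p$ must share a colour (which, as you note, is unavoidable, since a rainbow second neighbourhood would amount to colouring $G^2$), ``descending one more level to break the tie'' presupposes that the deeper colour environments can always be made to differ; but those deeper vertices are themselves constrained by properness and by the demands of all their other ancestors, and an automorphism could swap two entire isomorphic branches hanging from $p$. Resolving this simultaneous system of constraints \emph{is} the theorem, and it is absent.

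A secondary concern is your decision to reserve colour $\Delta+1$ for the root alone, which forces all of $G-r$ to be distinguished by what is essentially a proper $\Delta$-colouring anchored only at $r$ and at the one-common-neighbour property. Nothing in the proposal shows this extra restriction is always satisfiable; highly symmetric girth-five graphs such as the Petersen or Heawood graphs are exactly the instances where one would have to check it, and they are also the instances where the inductive ``break ties one level deeper'' step is most in doubt. So the proposal identifies the right landmarks, but the core of the proof remains unwritten.
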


Motivated by the question which of the two induced forbidden subgraphs we may omit for similar findings, we study upper bounds on the distinguishing chromatic number in hereditary graph classes. All our graph classes are defined in terms of forbidden induced graphs from the set $\{C_4, 2K_2,K_{1,3},K_4,K_4-e\}$. Furthermore, all our obtained bounds are tight and we characterize graphs that achieve equality. 

We use standard terminology and notation of graph theory (cf.~\cite{Diestel}). Besides from that, a vertex $v$ of a graph $G$ is called {\it universal} if $N_G[v]=V(G)$. Furthermore, a vertex $v$ is \emph{simplicial} if $N_G(v)$ is a clique. Given a set $S\subseteq V(G)$, we say that a vertex $v$ is {\it complete to} $S$ if $S\subseteq N_G(v)$, and {\it anti-complete to} $S$ if $S\cap N_G(v)=\emptyset$. A set $M\subseteq V(G)$ is a {\it module of $G$} if every vertex outside $M$ is complete or anti-complete to $M$.

If $G$ and $H$ are two vertex-disjoint graphs, then $G+H$ stands for their {\it join}, that is, $G+H$ is the graph with vertex set $V(G)\cup V(H)$ and edge set $E(G)\cup E(H)\cup \{uv:u\in V(G), v\in V(H)\}$.

\section{Dealing with simplicial vertices}

\begin{proposition}\label{simplicial:proposition:connectedness}
If $G$ is a connected graph and the set $S$ of simplicial vertices in $G$ is non-empty, then $G-S$ is connected.
\end{proposition}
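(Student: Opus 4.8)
The plan is to prove that any two vertices of $G-S$ are joined by a path lying entirely in $G-S$; this yields connectivity, once the boundary case in which $G-S$ has no vertices is addressed. First I would dispose of the degenerate situation in which every vertex of $G$ is simplicial. Taking a shortest path between any two vertices and using that an internal vertex $v$ of such a path has both of its path-neighbours inside the clique $N_G(v)$, one sees that no shortest path can have length exceeding one; hence $G$ is complete, $S=V(G)$, and $G-S$ is the empty (vacuously connected) graph. So from then on I may assume $V(G)\setminus S\neq\emptyset$.

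For the main case, fix two vertices $u,w\in V(G)\setminus S$. Since $G$ is connected, there is a $u$--$w$ path $P=v_0v_1\cdots v_k$ in $G$ with $v_0=u$ and $v_k=w$. The key observation is a rerouting step: if some internal vertex $v_i$ (with $0<i<k$) is simplicial, then both $v_{i-1}$ and $v_{i+1}$ belong to $N_G(v_i)$, which by definition is a clique; hence $v_{i-1}v_{i+1}\in E(G)$. Deleting $v_i$ and using this edge yields a new $u$--$w$ path $v_0\cdots v_{i-1}v_{i+1}\cdots v_k$ whose vertex set is a proper subset of $V(P)$.

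I would then iterate this rerouting, which strictly decreases the number of vertices on the path at each step and therefore terminates, arriving at a $u$--$w$ path with no internal simplicial vertex. Since the endpoints $u,w$ were chosen outside $S$, this final path avoids $S$ altogether and thus lies in $G-S$. As $u,w$ were arbitrary in $V(G)\setminus S$, the graph $G-S$ is connected.

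The individual steps are routine; the one point requiring care is the bookkeeping in the rerouting step. I must keep the object a genuine simple path, so that the neighbours $v_{i-1},v_{i+1}$ are distinct and the process cannot loop---this is guaranteed because each step only deletes a vertex and reuses an edge already present in $G$, so the result is again a simple path on fewer vertices. The only other subtlety is the empty-graph boundary case, which is precisely why I isolate the all-simplicial situation at the outset.
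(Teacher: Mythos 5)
Your argument is correct, but it proves the statement by a different route than the paper. You argue constructively: given $u,w\notin S$, you take any $u$--$w$ path and repeatedly bypass an internal simplicial vertex $v_i$ via the edge $v_{i-1}v_{i+1}$, which exists because $N_G(v_i)$ is a clique; since each rerouting step deletes a vertex from a simple path (so the result is again a simple path, and $v_{i-1}\neq v_{i+1}$ automatically), the process terminates with a $u$--$w$ path avoiding $S$. The paper instead argues by contradiction at the level of separators: if $G-S$ were disconnected, some minimal cut-set $S'\subseteq S$ would exist, and every vertex of a minimal cut-set has neighbours in at least two components of $G-S'$, hence two non-adjacent neighbours, contradicting simpliciality. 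The two proofs exploit the same local fact (a simplicial vertex cannot have two non-adjacent neighbours, so it cannot be essential for connecting anything), but yours packages it as an explicit path-shortening procedure while the paper's packages it as a one-line property of minimal separators. Yours is more elementary and self-contained; the paper's is shorter but leans on the standard fact about minimal cut-sets. Your isolation of the all-simplicial boundary case ($S=V(G)$, so $G-S$ is empty) is careful, though you could dispose of it without showing $G$ is complete, simply by declaring the empty graph vacuously connected.
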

\begin{proof}
For the sake of contradiction, we assume that there is a minimal cut-set $S'\subseteq S$ of $G.$ Hence, each vertex of $S'$ has neighbours in at least~$2$ components of $G-S'$. In other words, each vertex of $S'$ has two non-adjacent neighbours, a contradiction to the fact that all vertices of $S'$ are simplicial. Thus, $G-S$ is connected.
\end{proof}

\begin{lemma}\label{simplicial:lemma:reduction}
If $G$ is a connected graph, the set $S$ of simplicial vertices in $G$ is non-empty, and $\chi_D(G)>\chi_D(G-S)$, then either $\chi_D(G)\leq \Delta(G)$
or
$G$ is complete multipartite with a universal vertex.
\end{lemma}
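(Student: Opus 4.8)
Plan. The first thing I would record is that being simplicial is preserved by automorphisms, so every $\varphi\in\Aut(G,c)$ maps $S$ to $S$ and hence restricts to an automorphism of $H:=G-S$. Consequently, if I start from an optimal distinguishing colouring $c_0$ of $H$ (which exists, uses $k:=\chi_D(H)$ colours, and has $H$ connected by Proposition~\ref{simplicial:proposition:connectedness}) and extend it to $G$, then every colour-preserving automorphism of the extended graph already fixes $V(H)$ pointwise. So the whole problem becomes: colour $S$ so that no colour-preserving automorphism fixing $H$ pointwise moves a simplicial vertex. The target I would aim for is the extension claim ``$c_0$ extends to a distinguishing colouring of $G$ with $t:=\max(k,\Delta)$ colours, unless $G$ is complete multipartite with a universal vertex.'' Granting this, the lemma follows: in the non-exceptional case $\chi_D(G)\le\max(k,\Delta)$, and combined with the hypothesis $\chi_D(G)>k$ this forces $\Delta>k$, hence $\chi_D(G)\le\Delta$.

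Next I would set up the local structure of $S$. Two adjacent simplicial vertices are true twins ($N[v]=N[w]$), and this relation is an equivalence, so $G[S]$ is a disjoint union of cliques $C_1,\dots,C_q$; each $C_j$ has a common external neighbourhood $M_j:=N(v)\setminus C_j\subseteq V(H)$ for $v\in C_j$, and $C_j\cup M_j$ is a clique. A colour-preserving automorphism fixing $H$ pointwise must preserve each $M_j$ and the clique sizes, so it can only permute the $C_j$ sharing the same pair $(M_j,|C_j|)$, while within a clique it is forced to be the identity (distinct colours). This reduces the extension to two numerical conditions: \emph{(P)} $|C_j|+|M_j|\le t$ for every $j$; and \emph{(D)} for every clique $M$ and size $s$, the number $n_{M,s}$ of size-$s$ classes with external neighbourhood $M$ is at most $\binom{t-|M|}{s}$, since each such class needs a distinct $s$-subset of the $t-|M|$ colours left unused on $M$.

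I would then show that failure of either condition pins down the exceptional graph. If \emph{(P)} fails, some $C_j\cup M_j$ is a clique of size $\Delta+1$; every vertex of it then has degree exactly $\Delta$ and hence no neighbour outside, so this clique is a whole component and, by connectedness, $G=K_{\Delta+1}$, which is complete multipartite with universal vertices. For \emph{(D)}, the key is a counting argument: each $u\in M$ is adjacent to all $n_{M,s}\,s$ vertices of the size-$s$ classes and to the other $|M|-1$ vertices of $M$, giving $n_{M,s}\,s+|M|-1\le\Delta$; comparing this with $n_{M,s}>\binom{\Delta-|M|}{s}$ via elementary binomial estimates (using $s\binom{a}{s}\ge a+1$ for $a>s$, and treating $a=s$ directly) I expect to rule out every $s\ge 2$ outright, so \emph{(D)} can only fail for $s=1$. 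In that surviving case I have false twins $v_1,\dots,v_n$ with common neighbourhood $M$ and $n>\Delta-|M|$; then each $u\in M$ satisfies $\deg(u)\ge n+|M|-1\ge\Delta$, forcing $N[u]=M\cup\{v_1,\dots,v_n\}$ for all $u\in M$, whence (by connectedness) $G=K_{|M|}+\overline{K_n}$, again complete multipartite with a universal vertex.

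I expect the main obstacle to be the distinguishing bookkeeping rather than the proper-colouring count: one must argue that the only surviving colour-preserving automorphisms are the clique-swaps between classes sharing $(M,s)$, translate this into the clean injectivity/binomial condition \emph{(D)}, and then push the counting inequality hard enough both to eliminate all $s\ge2$ obstructions and to squeeze out the exact complete-split structure when $s=1$. A secondary point to handle is the boundary behaviour, namely $H=\emptyset$ (so $G$ is a single clique) and the case $k>\Delta$, where I would check that \emph{(P)} and \emph{(D)} hold automatically, so that the extension always succeeds and the hypothesis $\chi_D(G)>k$ cannot occur there.
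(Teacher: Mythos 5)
Your proposal is correct, and it shares the paper's overall skeleton: automorphisms preserve $S$ and restrict to $G-S$, one extends an optimal distinguishing colouring of $G-S$ over the simplicial vertices using the fact that each simplicial closed neighbourhood is a clique, and one then identifies the graphs for which this extension costs more than $\Delta(G)$ colours. Where you genuinely diverge is in the partition of $S$ and in the endgame. The paper groups simplicial vertices by equality of their neighbourhoods in $V(G)\setminus S$ and colours each class rainbow with the smallest colours unused on its clique neighbourhood; it then analyses only the class carrying the largest colour, and in the single surviving configuration $V(G)=S_i\cup N_G(S_i)$ it saves a colour by giving two non-adjacent vertices of $S_i$, one of which has a private neighbour, the same colour --- this is where the non-complete-multipartite hypothesis is used. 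You refine the partition to true-twin cliques $C_j$, let distinct cliques with the same pair $(M,s)$ reuse colours via distinct $s$-subsets of the $t-|M|$ free colours, and translate distinguishability into the feasibility conditions (P) and (D); degree counting then shows that a failure of either forces $G\cong K_{\Delta+1}$ or $G\cong \overline{K_n}+K_{|M|}$, i.e.\ exactly the complete split graphs $\alpha(G)K_1+K_{\omega(G)-1}$ that reappear in Theorem~\ref{simplicial:theorem:chordal}. Your route costs the extra binomial estimate needed to eliminate $s\ge 2$, but buys a sharper, self-contained characterisation of the exceptional graphs rather than an a priori case split on complete multipartiteness. Two small points to tidy: when some $M_j=\emptyset$ (equivalently $G-S=\emptyset$), condition (P) in fact \emph{fails} for $t=\Delta$ rather than holding automatically, so that boundary case should be closed by observing directly that $G$ is a single clique, hence exceptional; and the claim that $s\ge 2$ is ruled out ``outright'' should be read as ``given that (P) holds,'' since a failure of (D) with $\binom{t-|M|}{s}=0$ is otherwise just a restatement of a (P)-failure.
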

\begin{proof}
Let $G\cong K_{p_1,\ldots,p_r}$, where $r\geq 1$ and $p_1\geq \ldots\geq p_r$, be a complete multipartite graph. Nota that $\chi_D(G)=|V(G)|>\Delta(G)$. As $S\neq \emptyset$, we have $p_i=1$ for $i=2,\ldots,r$. Hence, there are $p_1$ simplicial vertices, and each neighbour of a simplicial vertex is universal.
We continue by assuming that $G$ is not complete multipartite. By Proposition~\ref{simplicial:proposition:connectedness}, $G-S$ is connected.

Let $c'\colon V(G-S)\to [\chi_D(G-S)]$ be a proper distinguishing vertex colouring of $G-S$. 
Note that the relation of two vertices in $S$ having the same neighbours in $V(G)-S$ defines an equivalence relation.
Let $S_1,S_2,\ldots, S_m$ be its classes.
Note that vertices of different classes are non-adjacent as they are simplicial.

We extend $c'$ to a vertex colouring~$c$ by colouring each class $S_i$ as follows. Note that $N_G(S_i)$ is a clique as all the vertices of $S_i$ have the same neighbours and as each vertex of $S_i$ is a simplicial vertex.
Hence, we take the $|S_i|$ smallest colours that are not used by $c'$ on $N_G(S_i)$ and colour the vertices of $S_i$ pairwise differently with these colours.
As vertices of different classes are non-adjacent,~$c$ is a proper vertex colouring of $G$.

We next prove that $c$ is distinguishing. Let $\varphi\in {\rm Aut}(G,c)$ be an arbitrary automorphism, and $v\in V(G-S)$ be an arbitrary vertex.
As $v\notin S$, we also have $\varphi(v) \notin S$. In other words, every automorphism of ${\rm Aut}(G,c)$ maps $V(G-S)$ to $V(G-S)$. As $c'$ fixes $V(G-S)$ in $G-S$, the vertex colouring $c$ fixes all vertices of~$V(G-S)$ in $G$ as well. As the vertices of each~$S_i$ are coloured pairwise differently, $c$ fixes all vertices of $G$ and is a proper distinguishing vertex colouring.

We now count the number of colours used by~$c$. 
Let $k$ be the largest colour on a vertex of~$G$. Clearly, $\chi_D(G)\leq k$.
By our assumption on $G$, we have $k>\chi_D(G-S)$. Hence, there is some~$S_i$ containing a vertex of colour $k$.
However, as $k$ is among the $|S_i|$ smallest colours that are not used by $c'$ on $N_G(S_i)$, we have $k\leq |S_i\cup N_G(S_i)|$.
If in $G-S_i$ there is a vertex $v\in N_G(S_i)$ with a neighbour~$w$ outside $N_G(S_i)$, then $S_i\cup N_G(S_i)\subseteq \{v\}\cup (N_G(v)\setminus\{w\})$ and
\[\chi_G(D)\leq k \leq |S_i\cup N_G(S_i)| \leq  |N_G(v)| \leq \Delta(G),\]
which is our desired result.
Hence, we may assume that in $G-S_i$ every vertex $v\in N_G(S_i)$ has its neighbours in $N_G(S_i)$.
In other words, $V(G)=S_i\cup N_G(S_i)$.
As $S_i$ is complete to the clique $N_G(S_i)$, we have \[\chi_D(G)=\chi_D(G[S_i])+\chi_D(G[N_G(S_i)])=\chi_D(G[S_i])+|N_G(S_i)|.\]
As $G$ is not complete multipartite, there are two non-adjacent vertices, say $s_1,s_2$, in $S_i$ one of which, say~$s_1$, has a private neighbour in~$S_i$.
Now, it is easily seen that a vertex colouring of $G[S_i]$ in which $s_1$ and $s_2$ are coloured alike and all vertices of $S_i\setminus\{s_1\}$ are coloured by pairwise distinct colours is proper and distinguishing in $G[S_i]$, and so $\chi_D(G[S_i])<|S_i|$.
We obtain $\chi_D(G)< |V(G)|=\Delta(G)+1$.
\end{proof}


\section{$C_4$-free graphs and $2K_2$-free graphs}

Let $G$ be a graph, and $c$ be a vertex colouring of $G$. A vertex $u$ is \emph{unique} for $(G,c)$ if 
\begin{itemize}
    \item $c(u)=\Delta(G)+1$,
    \item $\Delta(G)\notin c(N_G(u))$, and
    \item $\Delta(G)\in c(N_G(v))$ for each $v\in V(G)\setminus\{u\}$ with $c(v)=\Delta(G)+1$.
\end{itemize}

\begin{lemma}\label{C4:lemma}
    If $G$ is a connected $C_4$-free graph and $u\in V(G)$ is a vertex such that \[\chi_D(G[N_G[u]])\leq \Delta(G),\] then
    \[\chi_D(G)\leq \Delta(G)+1.\]
\end{lemma}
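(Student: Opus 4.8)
The plan is to start from a cheap distinguishing colouring of the ball $N_G[u]$ and to extend it outwards along a BFS from $u$, spending the single extra colour $\Delta+1$ in a controlled way so that $u$ becomes a \emph{unique} vertex in the sense defined just before the lemma. First I would normalise the given colouring. Let $c_0$ be a distinguishing colouring of $G[N_G[u]]$ with colours in $[\Delta]$. Since $c_0$ is proper and $u$ is adjacent to all of $N_G(u)$, the neighbours of $u$ avoid $c_0(u)$ and therefore use at most $\Delta-1$ distinct colours; hence some colour of $[\Delta]$ is missing on $N_G(u)$. Renaming the colours so that this missing colour becomes $\Delta$, and then recolouring $u$ with the fresh colour $\Delta+1$, yields a colouring $c_0'$ of $G[N_G[u]]$ that is still distinguishing (promoting $u$ to a private colour only refines the colour classes, so it cannot enlarge the automorphism group), in which $c_0'(u)=\Delta+1$ and $\Delta\notin c_0'(N_G(u))$.

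Next I would extend $c_0'$ to a proper colouring $c$ of all of $G$ with colours in $[\Delta+1]$, processing the remaining vertices in order of their distance from $u$ and always using the smallest admissible colour; this is possible because every vertex has degree at most $\Delta$. The point of the smallest-colour rule is that any vertex $v\neq u$ receiving colour $\Delta+1$ must have had all of $1,\dots,\Delta$ present on its already-coloured neighbours, so in particular $\Delta\in c(N_G(v))$. Together with $\Delta\notin c(N_G(u))$, this makes $u$ the unique vertex of $(G,c)$ with colour $\Delta+1$ and no neighbour coloured $\Delta$; that is, $u$ is a unique vertex. Since the defining property of a unique vertex is preserved by every element of $\Aut(G,c)$, every automorphism fixes $u$; it then maps $N_G[u]$ to itself and restricts to a colour-preserving automorphism of $G[N_G[u]]$, which is the identity because $c_0'$ is distinguishing there. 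Thus $\Aut(G,c)$ fixes $N_G[u]$ pointwise.

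Finally I would propagate the fixing outwards by induction on the BFS layers $L_0=\{u\},\ L_1=N_G(u),\ L_2,\dots$. Suppose $\varphi\in\Aut(G,c)$ already fixes $L_0\cup\dots\cup L_i$ pointwise and that $\varphi(w_1)=w_2$ for distinct $w_1,w_2\in L_{i+1}$. As $u$ is fixed, $\varphi$ preserves the layers, and as $\varphi$ fixes $L_i$ pointwise the two vertices share the same neighbourhood $P=N_G(w_1)\cap L_i=N_G(w_2)\cap L_i$ in the previous layer and the same colour, so they are non-adjacent. Here $C_4$-freeness enters through the structural fact that the common neighbourhood of two non-adjacent vertices is a clique: applied to $w_1,w_2$ it forces $P$ to be a clique, and counting $N_G(p)$ for a fixed $p\in P$ then bounds the number of vertices of $L_{i+1}$ sharing the parent-set $P$ by $\Delta+1-|P|$. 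Since every such ``parent-twin'' must avoid the $|P|$ colours on the clique $P$, this is exactly the room needed to colour them with pairwise distinct colours, which would preclude the swap $w_1\leftrightarrow w_2$ and close the induction.

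The main obstacle is the design of the extension underpinning the previous two paragraphs: the smallest-colour rule that delivers the unique vertex does \emph{not} by itself separate parent-twins, since two non-adjacent twins may greedily receive the same colour, so the colouring must be chosen to enforce both properties at once. The delicate point is to reconcile twin-separation with the requirement that $u$ remain the only colour-$(\Delta+1)$ vertex without a $\Delta$-coloured neighbour, and to verify that the budget $\Delta+1$ genuinely suffices for twin-separation across a whole layer once its internal adjacencies are taken into account. This is precisely where the $C_4$-free clique structure must be used quantitatively rather than merely qualitatively.
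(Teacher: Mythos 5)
Your opening moves match the paper's: you normalise the colouring on $N_G[u]$ so that $u$ gets the private colour $\Delta+1$ with no neighbour coloured $\Delta$, extend greedily so that any other vertex forced to colour $\Delta+1$ has seen colour $\Delta$ on a neighbour (this is exactly the paper's notion of $u$ being \emph{unique}), and then try to propagate fixedness outward along distance layers using the fact that in a $C_4$-free graph two non-adjacent vertices have a clique as common neighbourhood. However, there is a genuine gap, and you name it yourself in your last paragraph: you never actually produce a colouring rule that simultaneously keeps the colouring proper, preserves the uniqueness of $u$, and separates the ``parent-twins'' in a layer. Your plan is to give all vertices of a layer with a common parent-set $P$ pairwise distinct colours and to make the degree count $|P|+(\Delta+1-|P|)$ close the argument, but this ignores that such a vertex may also have already-coloured neighbours \emph{inside its own layer}; once those are added to the forbidden set, the budget of $\Delta+1$ colours can be exhausted, and any twin pushed to colour $\Delta+1$ must additionally acquire a neighbour of colour $\Delta$ to protect the uniqueness of $u$. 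As written, the induction step does not close.

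The paper resolves exactly this difficulty by \emph{not} insisting that all twins be separated by colour. It peels off only the set $M$ of vertices at maximum distance from $u$, applies induction to $G-M$, partitions $M$ into classes $M_1,\dots,M_p$ by their neighbourhood outside $M$, orders the classes by the size of these neighbourhoods, and for each pair $i<k$ fixes a witness vertex $m_{k,i}\in N_G(M_k)\setminus(N_G(M_i)\cup M)$, which exists precisely because of the chosen ordering. Each class $M_k$ is then split into the vertices with no neighbour in earlier classes (these are coloured pairwise distinctly, and the clique structure forced by $C_4$-freeness shows this fits into the colour budget and respects uniqueness of $u$) and the vertices with such a neighbour (these are coloured greedily and possibly coincide in colour with a twin). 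For the latter, fixedness is obtained structurally rather than chromatically: if $\varphi$ swapped $m$ and $\varphi(m)$ in $M_k$, a fixed neighbour $m_i$ in an earlier class together with the witness $m_{k,i}$ would yield an induced $C_4$ on $\{m_i,m,\varphi(m),m_{k,i}\}$. Your proposal has no analogue of this ordering-plus-witness device, which is the actual engine of the proof; the quantitative clique counting you hope will substitute for it does not suffice on its own.
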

\begin{proof}
We now prove by induction that there is a proper distinguishing vertex colouring~$c$ of~$G$ on $\Delta(G)+1$ colours for which $u$ is unique for $(G,c)$.

First assume that $u$ is universal. 
Let $c$ be a proper distinguishing vertex colouring of $G[N_G[u]]$ in the colour set $[\Delta(G)+1]\setminus\{\Delta(G)\}$. We may assume, without loss of generality, $c(u)=\Delta(G)+1$.
Note that $u$ is unique for $(G,c)$. Thus, in what follows, we may assume that $u$ is not universal.

Let $M$ be the set of vertices of maximum distance to $u$. Note that $u$ is anti-complete to $M$. Furthermore, $G-M$ is connected, as otherwise there would be a vertex of larger distance to $u$. 
We also note $\Delta(G-M)\leq \Delta(G)$.
By this fact and induction, there is a proper distinguishing vertex colouring 
$c'\colon V(G-M) \to [\Delta(G)+1]$ such that~$u$ is unique for $(G-M,c')$.

Note that the property of two vertices in $M$ having the same neighbours in $V(G)\setminus M$ defines an equivalence relation. We let $M_1,M_2,\ldots, M_p$ be its classes and assume indices such that
\[|N_G(M_i)\setminus M|\leq |N_G(M_k)\setminus M|\quad\textnormal{whenever}\quad i<k.\]
We choose a vertex $m_{k,i}\in N_G(M_k)\setminus (N_G(M_i)\cup M)$ for each pair $i<k$.
We further divide every $M_k$ into two sets $M_k^1$ and $M_k^2$. 
The set $M_k^1$ contains all vertices of $M_k$ that do not have a neighbour in $M_1\cup M_2\cup \ldots \cup M_{k-1}$ while every vertex of $M_k^2$ has a neighbour in $M_1\cup M_2\cup \ldots \cup M_{k-1}$. 

We are now in a position to extend $c'$ to a vertex colouring~$c$ of $G$ by colouring the vertices of~$M$.
For each $M_k^1$, we take the $|M_k^1|$ smallest colours not used on $N_G(M_k^1)\setminus M$ in order to colour the vertices of $M_k^1$ pairwise differently.
Furthermore, we take a vertex ordering $m_1,m_2,\ldots,m_q$ of $M_1^2\cup M_2^2\cup\ldots\cup M_p^2$ and colour a vertex $m_j$ by the smallest colour not used on
$N_G(m_j)\setminus\{m_{j+1},m_{j+2},\ldots,m_q\}$.

As $G$ is $C_4$-free, we have that one of $M_k^1$ and $N_G(M_k^1)\setminus M$ is a clique. 
If $M_k^1$ is a clique, then each of its vertices is universal in $G[M_k^1\cup (N_G(M_k^1)\setminus M)]$, and so every vertex of $M_k^1$ of colour $\Delta(G)$ has a neighbour of colour $\Delta(G)$ in $G$.
If $N_G(M_k^1)\setminus M$ is a clique, then
$c(m)\leq |M_k^1|+|N_G(M_k^1)\setminus M| \leq d_G(v)\leq \Delta(G)$ for each vertex $v\in N_G(M_k^1)\setminus M$ and for each $m\in M_k^1$ as $u$ is not universal.
In other words, if a vertex of $M_k$ receives colour $\Delta(G)+1$, then it has a neighbour of colour $\Delta(G)$.
Thus, $c$ is a proper vertex colouring of $G$ on at most~$\Delta(G)+1$ colours in which $u$ is unique for $(G,c)$.

It remains to prove that $c$ is distinguishing. Let $\varphi\in{\rm Aut}(G,c)$ be arbitrary.
As $u$ is unique, it is fixed. Furthermore, $w$ and $\varphi(w)$ are of the same distance apart from~$u$ for each $w\in V(G)$. Hence, $\varphi(M)=M$ and
$\varphi(V(G)\setminus M)=V(G)\setminus M$. 
In other words, $\varphi$ restricted to $V(G)\setminus M$ is an automorphism of ${\rm Aut}(G-M,c')$. Thus, $c$ fixes all vertices of $V(G)\setminus M$. 

We claim that all vertices of $M_k$ are fixed if all vertices $M_1\cup M_2\cup \ldots \cup M_{k-1}$ are fixed
and prove this claim by contradiction. Hence, for the sake of contradiction, let us suppose that there is an automorphism $\varphi\in{\rm Aut}(G,c)$ and a vertex $m\in M_k$ with $m\neq\varphi(m)$.
As all vertices of $V(G)\setminus M$ are fixed, we see that $m$ and $\varphi(m)$ have the same neighbours in $V(G)\setminus M$. In other words, $\varphi(m)\in M_k$.
We first consider $m\in M_k^2$ or $\varphi(m)\in M_k^2$, and let $m_i\in M_i$ for some $j<k$ be a neighbour of $m$ or $\varphi(m)$.
As $m_i$ is fixed, we get that $m_i$ is a neighbour of both $m$ and $\varphi(m)$. But now $\{m_i,m,\varphi(m),m_{k,i}\}$ induces a $C_4$, a contradiction. Hence,
$m,\varphi(m)\in M_k^1$.
But now $m$ and $\varphi(m)$ are coloured differently by definition, contradicting the fact $\varphi\in{\rm Aut}(G,c)$. 
In other words, our supposition is false and all the vertices of $M_k$ are fixed if all vertices $M_1\cup M_2\cup \ldots \cup M_{k-1}$ are fixed.

Our claim inductively implies that all vertices of $M$ are fixed. Hence, $c$ is distinguishing for~$G$ and our proof is complete.    
\end{proof}

\begin{theorem}\label{C4:theorem}
If $G$ is a connected $C_4$-free graph, then 
\[\chi_D(G)\leq \Delta(G)+2\]
with equality if and only if $G\cong C_6$.
\end{theorem}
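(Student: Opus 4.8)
The plan is to push as much as possible through Lemma~\ref{C4:lemma} and then isolate the few graphs on which it has no grip. Since $\chi_D(H)\le |V(H)|$ for every graph $H$ (colour all vertices distinctly), if $G$ is \emph{not} regular I choose a vertex $u$ of minimum degree $\delta<\Delta$; then $|N_G[u]|=\delta+1\le\Delta$, so $\chi_D(G[N_G[u]])\le\Delta$ and Lemma~\ref{C4:lemma} already gives $\chi_D(G)\le\Delta+1$. Hence I may assume $G$ is $\Delta$-regular, so $|N_G[u]|=\Delta+1$ and $\chi_D(G[N_G[u]])\le\Delta+1$ for every $u$, with equality only when $G[N_G[u]]$ is complete multipartite (Collins and Trenk's description of graphs attaining $\chi_D=|V|$, see \cite{CT}). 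Thus if some closed neighbourhood is \emph{not} complete multipartite, then $\chi_D(G[N_G[u]])\le\Delta$ and Lemma~\ref{C4:lemma} again yields $\chi_D(G)\le\Delta+1$. This leaves the core case: $G$ is $\Delta$-regular and every $G[N_G[u]]$ is complete multipartite. Deleting from $G[N_G[u]]$ the vertex $u$, which is universal there, leaves $G[N_G(u)]$ complete multipartite and $C_4$-free, so it has at most one part of size at least two (two such parts would induce a $C_4$); that is, $G[N_G(u)]\cong K_{q_u,1,\dots,1}$, an independent set complete to a clique.

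The step I expect to be the main obstacle is to show that no such neighbourhood can genuinely mix the two parts, i.e.\ that each $N_G(u)$ is a clique or an independent set. Suppose to the contrary that $N_G(u)$ splits as an independent set $A$ with $|A|=q\ge2$ complete to a non-empty clique $B$. Counting the neighbours of any $b\in B$ (namely $u$, all of $A$, and all of $B\setminus\{b\}$) gives exactly $\Delta$, so $N_G[b]=N_G[u]$; thus every vertex of $\{u\}\cup B$ is a true twin of $u$ with closed neighbourhood $N_G[u]$. Any $a\in A$ is adjacent to $u$ and to all of $B$, and as $d_G(a)=\Delta=q+|B|$ it has $q-1\ge1$ further neighbours, each lying outside $N_G[u]$; such a neighbour $x$ satisfies $x\notin N_G[u]=N_G[t]$ for every $t\in\{u\}\cup B$, so $x$ is non-adjacent to all of $\{u\}\cup B$. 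Picking distinct $t_1,t_2\in\{u\}\cup B$ together with $x$ (all lying in $N_G(a)$) exhibits inside $N_G(a)$ an edge plus an isolated vertex, which is impossible in a complete multipartite graph. This contradiction rules out the mixed case.

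It remains to read off the conclusion. Every $N_G(u)$ is now a clique $K_\Delta$ or an independent set. If some $N_G(u)$ is a clique, then $N_G[u]\cong K_{\Delta+1}$, each of its vertices already has degree $\Delta$ inside it, so $N_G[u]$ is a whole component and $G\cong K_{\Delta+1}$, whence $\chi_D(G)=\Delta+1$. Otherwise every neighbourhood is independent, so $G$ is triangle-free and therefore $(C_3,C_4)$-free; Cranston's Theorem~\ref{introduction:theorem:Cranston} then gives $\chi_D(G)\le\Delta+1$ unless $G\cong C_6$. Assembling all cases, $\chi_D(G)\le\Delta+1$ except when $G\cong C_6$, where $\chi_D(C_6)=4=\Delta+2$ by \cite{CT}; this proves both the bound $\chi_D(G)\le\Delta+2$ and that equality holds exactly for $C_6$.
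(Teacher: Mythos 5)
Your proposal is correct, and its skeleton matches the paper's: reduce via Lemma~\ref{C4:lemma} to the case where $G$ is $\Delta$-regular and every neighbourhood induces a complete multipartite graph (hence, by $C_4$-freeness, an independent set completely joined to a clique), and then show that $G$ must be complete or triangle-free so that Theorem~\ref{introduction:theorem:Cranston} applies. The one step you execute genuinely differently is ruling out the ``mixed'' neighbourhoods. The paper assumes a triangle exists, uses regularity to find a neighbour $p$ of $s_1\in I(t)$ with $p\not\sim t$, and chases memberships in the sets $K(\cdot)$ and $I(\cdot)$ until $\{t,s_1,s_2,p\}$ induces a $C_4$; to guarantee $|I(u)|\ge 2$ it first disposes separately of the case of a simplicial maximum-degree vertex. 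You instead show by degree counting that every vertex of $\{u\}\cup B$ is a true twin of $u$, extract a neighbour $x$ of $a\in A$ outside $N_G[u]$, and exhibit an induced $K_2\cup K_1$ inside $N_G(a)$, contradicting complete multipartiteness there; this is a clean alternative that needs no separate simplicial case. The other small divergence is that you invoke the Collins--Trenk characterization of graphs with $\chi_D=|V|$ from~\cite{CT} to get $\chi_D(G[N_G[u]])\le\Delta$ when the neighbourhood is not complete multipartite, where the paper constructs the required $\Delta$-colouring explicitly from a $P_3$ in the complement; yours is shorter but leans on the cited characterization, the paper's is self-contained. Both routes are valid and yield the same equality case via $\chi_D(C_6)=4$.
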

\begin{proof}
    If there is some vertex $u\in V(G)$ with $d_G(u)< \Delta(G)$, then $G[N_G[u]]$ can be distinguished by a proper vertex colouring in $\Delta(G)$ colours, and so $\chi_D(G)\leq \Delta(G)+1$ by Lemma~\ref{C4:lemma}.
    If a maximum degree vertex of $G$ is simplicial, then $G$ is complete, and $\chi_D(G)=|V(G)|=\Delta(G)+1$.
    Hence, for the remainder of our proof, we may assume that $G$ is regular and that none of its vertices is simplicial.

    Let us assume that there is some vertex $u\in V(G)$ for which $N_G(u)$ does not induce a complete multipartite graph. In other words, there are three vertices $v_1,v_2,v_3$ in $N_G(u)$ with $v_1v_2,v_2v_3\notin E(G)$ but $v_1v_3\in E(G)$. But now $G[N_G[u]]$ can be distinguished by a vertex colouring in $\Delta(G)$ colours as we can assign $v_1$ and $v_2$ the same and the vertices of $N_G[u]\setminus \{v_1\}$ pairwise different colours. Lemma~\ref{C4:lemma} implies $\chi_D(G)\leq \Delta(G)+1$. Hence, for the remainder of our proof we may assume that $N_G(u)$ induces a complete multipartite graph for each $u\in V(G)$. As $G$ is $C_4$-free, each $N_G(u)$ consists of a clique~$K(u)$ and an independent set~$I(u)$ such that $K(u)$ is complete to $I(u)$. As each $u$ is not simplicial, we have, in particular, $|I(u)|\geq 2$.

    We claim that $G$ is $C_3$-free and we prove this claim by contradiction. Suppose that there is some triangle $T$ in $G$. Let $t$ be an arbitrary vertex of $T$. Furthermore, let $s_1,s_2\in I(t)$ and $t'\in K(t)$.
    As $G$ is regular, $s_1$ has a neighbour $p$ that is not adjacent to $t$. Now $t,p\in I(s_1)$ but $t'\in N_G(s_1)$ is adjacent to~$t$. Hence, $t'$ is also adjacent to $p$.
    Similarly, $s_1,s_2\in I(t')$ and $p$ is adjacent to $s_1$. Hence, $p$ is also adjacent to $s_2$. But now $\{t,s_1,s_2,p\}$ induces a $C_4$, a contradiction. Hence, our supposition is false, $G$ is $C_3$-free, and the desired result follows by Theorem~\ref{introduction:theorem:Cranston}.
\end{proof}

A graph $G$ is \emph{chordal} if it is $C_k$-free for each $k\geq 4$.
A \emph{root}~$r$ in a chordal graph~$G$ is a vertex that has the same distance to all vertices of $V(G)\setminus\{r\}$ with degree at most~$\Delta(G)-1$.

\begin{figure}[h]\label{Ts}
\centering
\begin{tikzpicture}
\draw (-1,1)  node[bnode]{}-- (0,0) node[bnode]{} -- (1,1) node[bnode]{};
\draw (-3,1)  node[bnode]{}-- (0,0) node[bnode]{} -- (3,1) node[bnode]{};

\draw (-3,2)  node[bnode]{}-- (-3,1) node[bnode]{};
\draw (-1,2)  node[bnode]{}-- (-1,1) node[bnode]{};
\draw (3,2)  node[bnode]{}-- (3,1) node[bnode]{};
\draw (1,2)  node[bnode]{}-- (1,1) node[bnode]{};

\draw (-3.8,2)  node[bnode]{}-- (-3,1) node[bnode]{} -- (-2.2,2) node[bnode]{};
\draw (-1.8,2)  node[bnode]{}-- (-1,1) node[bnode]{} -- (-.2,2) node[bnode]{};
\draw (3.8,2)  node[bnode]{}-- (3,1) node[bnode]{} -- (2.2,2) node[bnode]{};
\draw (1.8,2)  node[bnode]{}-- (1,1) node[bnode]{} -- (.2,2) node[bnode]{};
\end{tikzpicture}
\caption{An example of a symmetric tree $T_s$}
\end{figure}
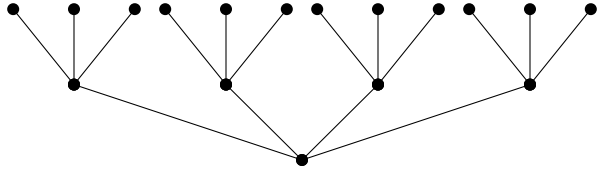

A tree $T_s$ is \emph{symmetric} if all non-leaves have maximum degree, one of these vertices is a root, and every leaf has the same distance to the root (see Fig.\ref{Ts}). 
A graph~$G$ is symmetric if either it is a symmetric tree or it can be constructed from a symmetric tree~$T_s$ by
\begin{itemize}
\item [A)] 
either adding all edges between all leafs of $N_{T_s}(v)$ for each support $v\in V(T_s)$ (such a tree is denoted by $T_A$),
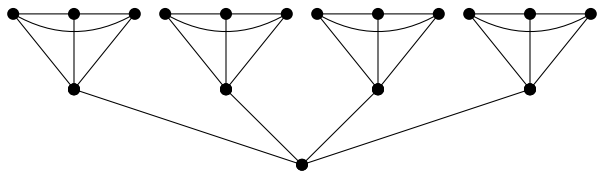
\begin{figure}[h]
\centering
\begin{tikzpicture}
\draw (-1,1)  node[bnode]{}-- (0,0) node[bnode]{} -- (1,1) node[bnode]{};
\draw (-3,1)  node[bnode]{}-- (0,0) node[bnode]{} -- (3,1) node[bnode]{};

\draw (-3,2)  node[bnode]{}-- (-3,1) node[bnode]{};
\draw (-1,2)  node[bnode]{}-- (-1,1) node[bnode]{};
\draw (3,2)  node[bnode]{}-- (3,1) node[bnode]{};
\draw (1,2)  node[bnode]{}-- (1,1) node[bnode]{};

\draw (-3.8,2)  node[bnode]{}-- (-3,1) node[bnode]{} -- (-2.2,2) node[bnode]{};
\draw (-1.8,2)  node[bnode]{}-- (-1,1) node[bnode]{} -- (-.2,2) node[bnode]{};
\draw (3.8,2)  node[bnode]{}-- (3,1) node[bnode]{} -- (2.2,2) node[bnode]{};
\draw (1.8,2)  node[bnode]{}-- (1,1) node[bnode]{} -- (.2,2) node[bnode]{};

\draw (-3.8,2)--(-2.2,2);
\draw[bend right] (-3.8,2) to (-2.2,2) ;
\draw (-1.8,2)--(-.2,2);
\draw[bend right] (-1.8,2) to (-.2,2) ;
\draw (1.8,2)--(.2,2);
\draw[bend right] (.2,2) to (1.8,2) ;
\draw (2.2,2)--(3.8,2);
\draw[bend right] (2.2,2) to (3.8,2);

\end{tikzpicture}
\caption{An example of a symmetric graph $T_A$}
\end{figure}

\item [B)]
or adding all edges between all leafs of $N_{T_s}(v)$ and adding a new vertex $v'$ which is adjacent to all leafs of $N_{T_s}(v)$ for each support $v\in V(T_s)$ (such a tree is denoted by $T_B$).
\end{itemize}
Observe that a complete graph $K_n$ is symmetric, since $K_n=T_A$ for the star $T_s=K_{1,n-1}$.

\begin{figure}[h]
\centering
\begin{tikzpicture}
\draw (-1,1)  node[bnode]{}-- (0,0) node[bnode]{} -- (1,1) node[bnode]{};
\draw (-3,1)  node[bnode]{}-- (0,0) node[bnode]{} -- (3,1) node[bnode]{};

\draw (-3,2)  node[bnode]{}-- (-3,1) node[bnode]{};
\draw (-1,2)  node[bnode]{}-- (-1,1) node[bnode]{};
\draw (3,2)  node[bnode]{}-- (3,1) node[bnode]{};
\draw (1,2)  node[bnode]{}-- (1,1) node[bnode]{};

\draw (-3.8,2)  node[bnode]{}-- (-3,1) node[bnode]{} -- (-2.2,2) node[bnode]{};
\draw (-1.8,2)  node[bnode]{}-- (-1,1) node[bnode]{} -- (-.2,2) node[bnode]{};
\draw (3.8,2)  node[bnode]{}-- (3,1) node[bnode]{} -- (2.2,2) node[bnode]{};
\draw (1.8,2)  node[bnode]{}-- (1,1) node[bnode]{} -- (.2,2) node[bnode]{};

\draw (-3.8,2)--(-2.2,2);
\draw[bend right] (-3.8,2) to (-2.2,2) ;
\draw (-1.8,2)--(-.2,2);
\draw[bend right] (-1.8,2) to (-.2,2) ;
\draw (1.8,2)--(.2,2);
\draw[bend right] (.2,2) to (1.8,2) ;
\draw (2.2,2)--(3.8,2);
\draw[bend right] (2.2,2) to (3.8,2);

\draw (-3,2)  node[bnode]{}-- (-3,2.6) node[bnode]{};
\draw (-1,2)  node[bnode]{}-- (-1,2.6) node[bnode]{};
\draw (3,2)  node[bnode]{}-- (3,2.6) node[bnode]{};
\draw (1,2)  node[bnode]{}-- (1,2.6) node[bnode]{};

\draw (-3.8,2)  node[bnode]{}-- (-3,2.6) node[bnode]{} -- (-2.2,2) node[bnode]{};
\draw (-1.8,2)  node[bnode]{}-- (-1,2.6) node[bnode]{} -- (-.2,2) node[bnode]{};
\draw (3.8,2)  node[bnode]{}-- (3,2.6) node[bnode]{} -- (2.2,2) node[bnode]{};
\draw (1.8,2)  node[bnode]{}-- (1,2.6) node[bnode]{} -- (.2,2) node[bnode]{};

\end{tikzpicture}
\caption{An example of a symmetric graph $T_B$}
\end{figure}
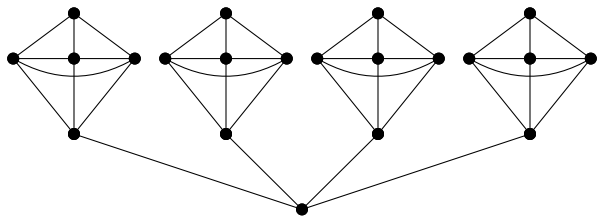

\begin{theorem}\label{simplicial:theorem:chordal}
If $G$ is a connected chordal graph, then \[\chi_D(G)\leq \Delta(G)+1\]
with equality if and only if $G$ is a symmetric graph or $G=\alpha(G)K_1+K_{\omega(G)-1}$. 
\end{theorem}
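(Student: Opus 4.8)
The plan is to prove the bound and both extremal families simultaneously by induction on $|V(G)|$, repeatedly peeling off the set $S$ of simplicial vertices, which is non-empty because $G$ is chordal. The tools are tailor-made for this: $G-S$ is again connected (Proposition~\ref{simplicial:proposition:connectedness}) and chordal with $\Delta(G-S)\le\Delta(G)$, the reduction Lemma~\ref{simplicial:lemma:reduction} controls what happens when deleting $S$ strictly raises $\chi_D$, and chordal graphs are in particular $C_4$-free, so Lemma~\ref{C4:lemma} is available as a fallback. The base case is the complete graph: if $S=V(G)$ then connectivity forces $G\cong K_n$, where $\chi_D(K_n)=|V(G)|=\Delta+1$, and $K_n$ lies in both families (it is $T_A$ for the star, and it equals $\alpha(G)K_1+K_{\omega(G)-1}$ with $\alpha=1$).

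For the upper bound, assume $G$ is not complete, so $G-S$ is non-empty and inductively $\chi_D(G-S)\le\Delta(G-S)+1\le\Delta(G)+1$. If $\chi_D(G)\le\chi_D(G-S)$ we are done. Otherwise Lemma~\ref{simplicial:lemma:reduction} yields either $\chi_D(G)\le\Delta(G)$, or that $G$ is complete multipartite with a universal vertex. In the latter case $C_4$-freeness forces at most one part of size $\ge 2$, so $G$ collapses to $K_{\alpha,1,\dots,1}=\alpha(G)K_1+K_{\omega(G)-1}$, and Collins and Trenk's characterization of $\chi_D(G)=|V(G)|$ gives $\chi_D(G)=|V(G)|=\Delta(G)+1$. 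This settles $\chi_D(G)\le\Delta(G)+1$ and already produces the complete-multipartite extremal case.

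For the \emph{if} direction I would show both families meet the bound. The graph $\alpha(G)K_1+K_{\omega(G)-1}$ is complete multipartite, so $\chi_D=|V(G)|=\Delta(G)+1$ directly. For a symmetric graph I would argue $\chi_D\ge\Delta(G)+1$ by a forcing/counting argument at the deepest clusters: sibling leaves (respectively the leaf-cliques of $T_A$, or the leaf-cliques with their common neighbours in $T_B$) must receive pairwise distinct colours to be fixed, and with only $\Delta$ colours available each such cluster is forced to use \emph{all} colours other than its support's colour. Hence, up to the automorphisms permuting them, the colouring of a pendant subtree is determined solely by its support colour, leaving at most $\Delta-1$ distinguishable subtree-types for the $\Delta$ children of a branch vertex (all of which avoid that vertex's colour); since $\Delta>\Delta-1$, two child-subtrees must be identically coloured, producing a nontrivial automorphism. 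Thus $\Delta$ colours never suffice and $\chi_D=\Delta(G)+1$.

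The \emph{only if} direction is the heart of the matter. Assuming $\chi_D(G)=\Delta(G)+1$ and that $G$ is not of the complete-multipartite form, the case $\chi_D(G)>\chi_D(G-S)$ is excluded (Lemma~\ref{simplicial:lemma:reduction} would force exactly that form), so $\chi_D(G)\le\chi_D(G-S)$; the squeeze $\Delta(G)+1=\chi_D(G)\le\chi_D(G-S)\le\Delta(G-S)+1\le\Delta(G)+1$ then forces $\Delta(G-S)=\Delta(G)$ and $\chi_D(G-S)=\Delta(G-S)+1$, so by induction $G-S$ is itself symmetric (or complete multipartite, which as in the star $K_{1,\Delta}$ also occurs as a symmetric tree). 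The remaining task is a \emph{reconstruction}: the same-neighbourhood classes of $S$ attach to cliques of $G-S$, and I must show that the only way to re-attach them while keeping $\chi_D(G)=\Delta(G)+1$ is precisely construction~A or~B applied to $G-S$ — that each support receives the correct number $\Delta-\deg$ of simplicial vertices, that these form the prescribed clique (and possibly one extra common neighbour), and that the resulting degrees and depths stay uniform. Proving this reconstruction is the main obstacle, because it requires showing that \emph{any} deviation from the symmetric pattern (a non-uniform number of attached simplicial vertices, an attachment in the wrong position, or an unequal depth) lets one recolour with $\Delta$ colours, precisely via the color-counting argument behind Lemma~\ref{simplicial:lemma:reduction}; carrying this out case-by-case over the types of $G-S$ is where the real work lies.
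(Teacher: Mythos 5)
Your overall strategy is the same as the paper's: peel off the non-empty set $S$ of simplicial vertices, use Proposition~\ref{simplicial:proposition:connectedness} and Lemma~\ref{simplicial:lemma:reduction} to dispose of the case $\chi_D(G-S)<\chi_D(G)$ (which yields the complete multipartite extremal graphs, since a $C_4$-free complete multipartite graph has at most one part of size $\ge 2$), and otherwise squeeze $\Delta(G-S)=\Delta(G)$ and $\chi_D(G-S)=\Delta(G-S)+1$ so that induction applies to $G-S$. Your derivation of the upper bound by this induction is a small, valid variation on the paper, which instead simply quotes Theorem~\ref{C4:theorem}; and your pigeonhole/forcing argument for $\chi_D\ge\Delta+1$ on symmetric graphs is at roughly the same level of detail as the paper's own sketch (which restricts a hypothetical $\Delta$-colouring of $T_A$ or $T_B$ to the underlying symmetric tree and invokes Collins--Trenk).

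The genuine gap is precisely the step you defer: the reconstruction of $G$ from $G-S$ in the \emph{only if} direction is announced but not performed, and it is the substance of the theorem. To close it you need, as the paper does, the following chain: letting $S'$ be the simplicial vertices of $G-S$, every vertex of $(G-S)-S'$ already has degree $\Delta$, so all new edges lie inside $S\cup S'$ and every vertex of $S'$ must be incident to a new edge (else it stays simplicial); the branches of $G$ at the neighbours of the root must be pairwise isomorphic and admit an essentially unique proper $\Delta$-colouring, since otherwise two neighbours of the root could share a colour and the colour $\Delta+1$ could be avoided entirely, contradicting $\chi_D(G)=\Delta+1$; hence every vertex of $S'$ has degree exactly $\Delta$ in $G$ and every component of $G[S\cup S']$ is isomorphic to one fixed graph $D$, which (by $claw$- and $C_4$-freeness of the attachment) is a complete graph or a star, pinning $G$ down as $T_s$, $T_A$ or $T_B$ according to $D\in\{K_{1,\Delta-1},K_{\Delta-1},K_\Delta\}$. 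You must also rule out two sub-cases that your sketch passes over: $G-S$ cannot be a $T_B$ (its simplicial vertices have degree $\Delta-1$, so they cannot all reach degree $\Delta$ consistently), and $G-S$ cannot be $\alpha(G-S)K_1+K_{\omega(G-S)-1}$ in the case $\chi_D(G-S)=\chi_D(G)$ (the clique vertices already have degree $\Delta$ and so get no new neighbours, while each vertex of $S'$ can absorb too few vertices of $S$ to prevent a repeated colour). Without this case analysis the characterization of equality is not proved.
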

\begin{proof}
Let $G$ be a chordal graph. It follows from Theorem~\ref{C4:theorem} that $\chi_D(G)\leq \Delta(G)+1$ since $G$ is $C_4$-free and $C_6$ is not chordal. Thus we only have to show that $\chi_D(G)=\Delta(G)+1$ if and only if $G$ is either a symmetric graph or $G=\alpha(G)K_1+K_{\omega(G)-1}$. 

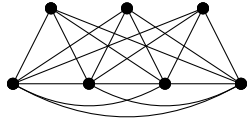
\begin{figure}[h]
\centering
\begin{tikzpicture}
\draw (1,0) node[bnode]{}--(2,0) node[bnode]{}--(3,0) node[bnode]{}--(4,0) node[bnode]{};
\draw[bend right] (1,0) node[bnode]{} to (4,0) node[bnode]{};
\draw[bend right] (1,0) node[bnode]{} to (3,0) node[bnode]{};
\draw[bend right] (2,0) node[bnode]{} to (4,0) node[bnode]{};
\draw (1.5,1) node[bnode]{} -- (1,0);
\draw (2,0)-- (1.5,1) node[bnode]{};
\draw (3,0)-- (1.5,1) node[bnode]{} -- (4,0);
\draw (1,0)-- (2.5,1) node[bnode]{} -- (4,0);
\draw  (3.5,1) node[bnode]{} -- (3,0);
\draw (3,0)-- (2.5,1) node[bnode]{} -- (2,0);
\draw (2,0)-- (3.5,1) node[bnode]{} -- (1,0);
\draw (3.5,1) node[bnode]{} -- (4,0);
\end{tikzpicture}
\caption{Graph $G=3K_1+K_4$}
\end{figure}

Collins and Trenk~\cite{CT} proved that every tree $T$ satisfies the inequality $\chi_D(T)\le \Delta(T)+1$, and the equality holds only for symmetric trees. A distinguishing proper colouring $c$ of a symmetric tree $T_s$ of maximum degree $\Delta$ may be chosen as follows. The central vertex $r$ gets colour $\Delta+1$, and its neighbours get distinct colours from $[\Delta]$.  Then, for every vertex $v$ of $T_s$, its children are coloured with $\Delta-1$ distinct colours from $[\Delta]\setminus \{c(v)\}$. It is easy to see how the colouring $c$ can be extended to symmetric graphs $T_A$ or $T_B$. On the other hand, if $T_A$ or $T_B$ had a proper distinguishing colouring $c'$ with $\Delta$ colours, then it is not difficult to see that the restriction of $c'$ to the corresponding symmetric subtree $T_s$ would also be distinguishing, contrary to the fact that $\chi_D(T_s)=\Delta(T_s)+1$. It is also obvious that for $G=\alpha(G)K_1+K_{\omega(G)-1}$ we have $\chi_D(G)=|G|=\Delta(G)+1$.

Denote $\Delta=\Delta(G)$. Suppose first that $\chi_D(G)=\Delta+1$. The set $S$ of simplicial vertices of $G$ is non-empty since $G$ is chordal (cf. \cite{Ro}). Using induction on the order of a graph, we now prove that $G$ is either a symmetric graph or $G=\alpha(G)K_1+K_{\omega(G)-1}$. Consider the subgraph $G-S$. If $\chi_D(G-S)<\chi_D(G)$, then $G=\alpha(G)K_1+K_{\omega(G)-1}$ by Lemma~\ref{simplicial:lemma:reduction}, and we are done. 

Suppose then that $\chi_D(G-S)=\chi_D(G)$. Hence, $\chi_D(G-S)=\Delta(G-S)+1$, so $\Delta(G-S)=\Delta$ because $\chi_D(G)=\Delta+1$. By the induction hypothesis, the graph $G-S$ is either one of symmetric graph $T_s,T_A,T_B$ or $G-S=\alpha(G-S)K_1+K_{\omega(G-S)-1}$. 

Let $S'$ be the set of simplicial vertices of $G-S$. Clearly, $S'\cap S=\emptyset$. To obtain the graph $G$ from $G-S$, we add vertices of $S$ and some new edges. Since every vertex of $(G-S)-S'$ has degree $\Delta(G-S)=\Delta$, every new edge joins two vertices of $S\cup S'$. Moreover, every vertex of $S'$ has to have an incident new edge, otherwise it would remain simplicial.  

Suppose first $G-S\in \{T_s,T_A,T_B\}$. Let $u_1,\ldots,u_{\Delta}$ be the set of neighbours of root $r$ of $G_S$. For $i=1,\ldots,\Delta$, let $B_{G-S}(u_i)$ denote the {\it branch at} $u_i$ in $G-S$, that is, the connected component of $(G-S)-r$ containing $u_i$. Observe that every proper colouring of the vertices of $B_{G_S}(u_i)$ with $\Delta$ colours is distinguishing and is unique up to a permutation of colours. 

As $G$ is chordal, the branches $B_G(u_i)$ are connected components of $G$ that are pairwise isomorphic. Otherwise, we could colour two neighbours of $r$ with the same colour, and we would not have to use colour $\Delta+1$ to $r$, and to any other vertex, contrary to the assumption that $\chi_D(G)=\Delta+1$. For the same reason, each branch $B_G(u_i)$ has a unique, up to a permutation of colours, proper $\Delta$-colouring. Consequently, every vertex of $S'$ has degree $\Delta$ in $G$, and each component of the subgraph $G[S'\cup S]$ is isomorphic to a graph $D$, which is either a complete graph or a star of suitable order. 

If $G-S$ is a symmetric tree $T_s$, then $G$ is still a symmetric tree when $D=K_{1,\Delta-1}$, or $G$ is $T_A$ when $D=K_{\Delta-1}$, or $G$ is $T_B$ when $D=K_{\Delta}$. If $G-S$ is a symmetric graph $T_A$, then $G$ is $T_B$ when $D=K_{\Delta}$, and there is no other option. Finally, $G-S$ cannot be a symmetric graph $T_B$ because each simplicial vertex in $T_B$ has degree $\Delta-1$. 

Finally, suppose that $G-S=\alpha(G-S)K_1+K_{\omega(G-S)-1}$. Every vertex of the clique $K_{\omega(G-S)-1}$ has degree $\Delta(G)=\Delta$, so it cannot have neighbours in $S$. Every vertex $u$ of $S'$ has degree $\Delta-\alpha(G-S)$, therefore $u$ can be adjacent to at most $\alpha(G-S)<\Delta$ vertices if $S$. It follows that some vertices of $S'$ may have the same colour in a proper distinguishing colouring of $G$. Thus, $G_S$ cannot be a graph $\alpha(G-S)K_1+K_{\omega(G-S)-1}$ when $\chi_D(G-S)<\chi_D(G)$.
\end{proof}

At the end of the Section we consider graphs that are both, $C_4$-free and $2K_2$-free.

\begin{theorem}
If $G$ is a connected $(C_4,2K_2)$-free graph, then \[\chi_D(G)\leq \Delta(G)+1\]
with equality if and only if $G\cong \alpha(G)K_1+K_{\omega(G)-1}$ or $G\cong C_5$. 
\end{theorem}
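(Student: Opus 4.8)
The plan is to separate the two assertions of the theorem and, for the equality part, to split according to whether $G$ contains an induced $C_5$. For the upper bound, since $G$ is $C_4$-free, Theorem~\ref{C4:theorem} gives $\chi_D(G)\le\Delta(G)+2$ with equality only for $C_6$; but $C_6$ contains an induced $2K_2$ (two opposite edges), so $2K_2$-freeness excludes $G\cong C_6$ and forces $\chi_D(G)\le\Delta(G)+1$. The ``if'' direction of the characterization is quick: $\chi_D(C_5)=3=\Delta(C_5)+1$, and every complete split graph $\alpha(G)K_1+K_{\omega(G)-1}$ is complete multipartite, hence $\chi_D=|V(G)|=\Delta(G)+1$.

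Suppose first that $G$ is moreover $C_5$-free. Then $G$ has no induced cycle of length at least $4$, since $C_4$ and $C_5$ are forbidden and every $C_k$ with $k\ge 6$ contains an induced $2K_2$; thus $G$ is chordal and Theorem~\ref{simplicial:theorem:chordal} applies. It remains to intersect its equality list with the $2K_2$-free graphs. The complete split graphs are always $2K_2$-free, whereas a symmetric graph built from a symmetric tree of depth at least $2$ always contains an induced $2K_2$ (two leaf–support edges in two distinct branches at the root are independent and non-joined, because the root is then not a support). Hence only depth-$1$ symmetric graphs survive, namely $K_{1,\Delta}$, $K_{\Delta+1}$ and $2K_1+K_\Delta$, and each of these is of the form $\alpha(G)K_1+K_{\omega(G)-1}$. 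So in this branch equality holds exactly for $\alpha(G)K_1+K_{\omega(G)-1}$.

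The harder branch is when $G$ contains an induced $C_5$ on $C=v_1\cdots v_5$, and here the heart of the argument is a structural description. A short case analysis over $A_w=N_G(w)\cap C$ for an external vertex $w$ shows that $2K_2$-freeness forbids $A_w$ from containing a vertex $v_i$ without also containing $v_{i+2}$ or $v_{i+3}$ (otherwise $wv_i$ and the opposite edge $v_{i+2}v_{i+3}$ induce a $2K_2$), while $C_4$-freeness forbids two neighbours at cyclic distance two with the intermediate vertex missing; running through the subsets of $\mathbb{Z}_5$ leaves only $A_w=\emptyset$ and $A_w=C$. Writing $D$ for the vertices complete to $C$ and $Z$ for those anti-complete to $C$, one then checks that $D$ is a clique (two non-adjacent vertices of $D$ with two non-adjacent vertices of $C$ would induce a $C_4$), that every $z\in Z$ has $N_G(z)\subseteq D$ and that $Z$ is independent (any other neighbour of $z$ would give a $2K_2$ with an edge of $C$). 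Thus $V(G)=C\sqcup D\sqcup Z$ with $G[C\cup D]=K_{|D|}+C_5$ and $Z$ attached to the clique $D$ as the independent side of a split graph.

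Finally I would exploit this structure. If $D=\emptyset$ then $Z=\emptyset$ by connectedness and $G\cong C_5$, the equality case. If $D\ne\emptyset$, put $m=|D|\ge 1$; the degrees then fall into three disjoint ranges, $\deg z\le m<m+2=\deg v_i<m+4\le\deg d$, so $\Delta(G)=m+4+\max_{d}|N_G(d)\cap Z|$ and every automorphism preserves each of $C$, $D$, $Z$ setwise. I would colour $C$ by a fixed distinguishing $3$-colouring of $C_5$ with colours $\{1,2,3\}$, colour the clique $D$ with the distinct colours $\{4,\dots,m+3\}$, and colour each twin class of $Z$ (vertices sharing a neighbourhood in the now-fixed $D$) by distinct colours of $\{1,\dots,\Delta(G)\}$ avoiding its neighbours' colours; the palette suffices because a twin class of size $k$ adds $k$ to the degree of each of its neighbours, whence $k\le\Delta(G)-m-4$. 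The result is a proper distinguishing colouring on at most $\Delta(G)$ colours, so $\chi_D(G)\le\Delta(G)$ and $G$ is not an equality case. Combining the two branches leaves $\alpha(G)K_1+K_{\omega(G)-1}$ and $C_5$ as the only equality graphs. The main obstacle is the structural lemma of the third paragraph; once the partition $C\sqcup D\sqcup Z$ is established, the degree bookkeeping and the colouring are routine.
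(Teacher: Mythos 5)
Your proof is correct, and its structural heart is the same as the paper's: once an induced cycle of length at least $4$ is present it must be a $C_5$, every external vertex is complete or anti-complete to it, the complete side $D$ is a clique, and everything else is an independent set attached only to $D$ --- this is precisely the paper's statement that $V(C)$ is a module, $N_G(V(C))$ is a clique, and the remaining vertices form an independent set $I$. The differences are in how the two branches are closed. In the chordal branch the paper just cites Theorem~\ref{simplicial:theorem:chordal} and leaves implicit the intersection of its equality list with the $2K_2$-free graphs; you carry that intersection out (symmetric graphs arising from trees of depth at least two contain an induced $2K_2$ via leaf--support edges in distinct branches, and the depth-one survivors $K_{1,\Delta}$, $K_{\Delta+1}$, $2K_1+K_\Delta$ are all of the form $\alpha(G)K_1+K_{\omega(G)-1}$), which is a useful explicit step. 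In the $C_5$ branch the paper notes that $I$ consists exactly of the simplicial vertices of $G$, computes $\chi_D(G-I)\le 3+|N_G(V(C))|\le\Delta(G)-1$, and then invokes Lemma~\ref{simplicial:lemma:reduction} to conclude $\chi_D(G)\le\Delta(G)$; you bypass that lemma entirely and build an explicit distinguishing colouring on at most $\Delta(G)$ colours, using the degree separation $\deg z\le m<m+2=\deg v_i<m+4\le\deg d$ to see that automorphisms preserve the parts $C$, $D$, $Z$, and the inequality $k\le\Delta(G)-m-4$ for a twin class of size $k$ to fit the palette. Both finishes work; the paper's is shorter because the simplicial-reduction lemma absorbs the bookkeeping, while yours is self-contained and makes the quantitative slack visible. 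You also obtain the upper bound $\chi_D(G)\le\Delta(G)+1$ from Theorem~\ref{C4:theorem} by excluding $C_6$ via its induced $2K_2$, which is a clean alternative to the paper's route through the two branches.
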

\begin{proof}
We note that the desired result follows if $G$ is chordal by Theorem~\ref{simplicial:theorem:chordal} or if $G\cong C_5$. Hence, we may assume that $G$ is not chordal and $G\not\cong C_5$.
Thus, there is an induced cycle $C$ of order at least~$4$. As~$G$ is $(C_4,2K_2)$-free, $C$ is of order~$5$.
As $G\not\cong C_5$ but $G$ is connected, $N_G(V(C))\neq \emptyset$.

As $G$ is $(C_4,2K_2)$-free, it follows that $V(C)$ is a module in $G$, $N_G(V(C))$ is a clique, and all vertices which are not in $C$ or $N_G(V(C))$ form in $G$ an independent set, say $I$.
Note that
\[\chi_D(G-I)=\chi_D(C) +\chi_D(G[N_G(V(C))])\leq 3+|N_G(V(C))|\leq d_G(v)-1\leq \Delta(G)-1\]
for each  $v\in N_G(V(C))$. Hence, $I\neq\emptyset$. 
Note that every vertex of $I$ and no vertex of $V(G-I)$ is simplicial in $G$. As further, $G-I$ is not a complete multipartite graph, Lemma~\ref{simplicial:lemma:reduction} implies $\chi_D(G)\leq \Delta(G)$. 
\end{proof}

\begin{lemma}[\cite{ChGTT}]\label{2K2:lemma:dominatingclique}
If $G$ is a connected $2K_2$-free graph, then~$G$ contains a dominating clique of size~$\omega(G)$ unless~$\omega(G)=2$. 
\end{lemma}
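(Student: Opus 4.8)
The plan is to prove the slightly sharper statement that a connected $2K_2$-free graph $G$ with $\omega(G)\ge 3$ has a \emph{maximum} clique that is dominating, since a dominating clique of size $\omega(G)$ is precisely a dominating maximum clique. The case $\omega(G)=1$ is trivial ($G$ is a single vertex), and the exclusion of $\omega(G)=2$ is genuine rather than technical: for $G\cong C_5$ no single edge dominates all five vertices. So assume $\omega(G)\ge 3$. Among all maximum cliques of $G$, I would fix one, say $Q$, that maximizes $|N_G[Q]|$, and argue by contradiction that $Q$ is dominating. Suppose it is not, and set $N_1=N_G(Q)\setminus Q$ and $N_2=V(G)\setminus N_G[Q]\neq\emptyset$. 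Connectivity forces an edge between $N_1$ and $N_2$: a vertex of $N_2$ with a neighbour in $N_G[Q]$ cannot have that neighbour in $Q$ (else it would lie in $N_G[Q]$), so the neighbour lies in $N_1$. This yields $a\in N_2$ and $b\in N_1$ with $ab\in E(G)$, where $a$ has no neighbour in $Q$.

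The first structural step uses $2K_2$-freeness on the edge $ab$: if $b$ missed two vertices $q_1,q_2\in Q$, then $\{a,b,q_1,q_2\}$ would induce a $2K_2$, namely the disjoint edges $ab$ and $q_1q_2$ with no cross edge, because $a$ is anti-complete to $Q$ and $b$ misses $q_1,q_2$. Hence $b$ is adjacent to all but at most one vertex of $Q$, and since $Q$ is a maximum clique $b$ cannot be complete to $Q$ (that would give the larger clique $Q\cup\{b\}$); so there is a \emph{unique} $q^{\ast}\in Q$ with $bq^{\ast}\notin E(G)$. Then $Q':=(Q\setminus\{q^{\ast}\})\cup\{b\}$ is again a clique of size $\omega(G)$, that is, a maximum clique, and it dominates $a$ through $b$. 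The goal is to contradict the extremal choice of $Q$ by proving $N_G[Q']\supsetneq N_G[Q]$.

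The main obstacle is exactly to show that the swap loses no domination, i.e.\ $N_G[Q]\subseteq N_G[Q']$. The only vertices at risk are those dominated by $Q$ solely through $q^{\ast}$: take $x\in N_G(q^{\ast})$ with $x\notin N_G[Q\setminus\{q^{\ast}\}]$ and $x\neq b$. Applying $2K_2$-freeness to the disjoint edges $xq^{\ast}$ and $ab$, the required cross edge must be $xa$ or $xb$ (the pairs $q^{\ast}a$ and $q^{\ast}b$ are non-edges). If $xb\in E(G)$ then $x\in N_G[Q']$ and there is nothing to do; the delicate case is $xa\in E(G)$. Here I would invoke the hypothesis $\omega(G)\ge 3$: choose two distinct vertices $q_1,q_2\in Q\setminus\{q^{\ast}\}$; since neither $a$ (as $a\in N_2$) nor $x$ (by the choice $x\notin N_G[Q\setminus\{q^{\ast}\}]$) has a neighbour in $\{q_1,q_2\}$, the set $\{x,a,q_1,q_2\}$ induces a $2K_2$ through the edges $xa$ and $q_1q_2$, a contradiction. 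Thus $xa\notin E(G)$, which forces $xb\in E(G)$, so every such $x$ remains dominated by $Q'$. Combined with $a\in N_G[Q']\setminus N_G[Q]$, this gives $N_G[Q']\supsetneq N_G[Q]$, contradicting the maximality of $|N_G[Q]|$. Hence $Q$ is dominating, which completes the proof. The essential use of $\omega(G)\ge 3$ is the availability of two vertices $q_1,q_2$ in $Q\setminus\{q^{\ast}\}$ to build the forbidden $2K_2$; when $\omega(G)=2$ this device is unavailable and, consistently, the statement admits the $C_5$ counterexample.
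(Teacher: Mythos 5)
Your proof is correct, but there is nothing in the paper to compare it against: the lemma is imported verbatim from Chung, Gy\'arf\'as, Tuza and Trotter~\cite{ChGTT} and stated without proof. Your self-contained argument is the natural extremal one and all steps check out. Choosing among maximum cliques a $Q$ maximizing $|N_G[Q]|$, connectivity gives an edge $ab$ with $a\notin N_G[Q]$ and $b\in N_G(Q)\setminus Q$; the first application of $2K_2$-freeness (to $ab$ against an edge of $Q$) correctly pins down a unique non-neighbour $q^{\ast}$ of $b$ in $Q$, and the swap $Q'=(Q\setminus\{q^{\ast}\})\cup\{b\}$ is again a maximum clique. The delicate part is showing $N_G[Q]\subseteq N_G[Q']$, and your treatment is sound: the only vertices at risk are $x\in N_G(q^{\ast})\setminus N_G[Q\setminus\{q^{\ast}\}]$ with $x\neq b$ (note $q^{\ast}$ itself is covered since it is adjacent to $Q\setminus\{q^{\ast}\}\subseteq Q'$), the pair of edges $xq^{\ast}$ and $ab$ spans four distinct vertices with $q^{\ast}a,q^{\ast}b\notin E(G)$, and when $xa\in E(G)$ the set $\{x,a,q_1,q_2\}$ with $q_1,q_2\in Q\setminus\{q^{\ast}\}$ is a genuine induced $2K_2$ because both $x$ and $a$ are anti-complete to $\{q_1,q_2\}$. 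That last step is exactly where $\omega(G)\ge 3$ is used, which matches the genuine exception $C_5$ at $\omega=2$. The only stylistic remark is that you could state explicitly that the four vertices in each forbidden configuration are pairwise distinct (you have all the facts needed: $a\notin N_G[Q]$ while $x\in N_G[Q]$, and $x\notin N_G[Q\setminus\{q^{\ast}\}]$ excludes $x\in\{q_1,q_2\}$), but this is a presentational point, not a gap.
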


\begin{theorem}
If $G$ is a connected $2K_2$-free graph, then
\[\chi_D(G)\leq 2\Delta(G)-\omega(G)+2\]
with equality if and only if $G$ is a complete graph or a balanced complete bipartite graph.
\end{theorem}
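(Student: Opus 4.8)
The plan is to split on the value of $\omega=\omega(G)$. For $\omega\le 2$ the graph is triangle-free, the claimed bound reads $\chi_D(G)\le 2\Delta(G)$, and this is exactly the Collins--Trenk bound (Theorem~1), whose equality cases are $K_{\Delta,\Delta}$ and $C_6$; since $C_6$ contains an induced $2K_2$ it is excluded, leaving only the balanced complete bipartite graph (with $K_2=K_{1,1}$ covering the degenerate complete case). So the $\omega\le 2$ situation is handled essentially for free. For the remaining range $\omega\ge 3$ I would invoke Lemma~\ref{2K2:lemma:dominatingclique} to fix a dominating clique $K=\{v_1,\dots,v_\omega\}$ of size $\omega$, set $R=V(G)\setminus K$, and aim to build an explicit proper distinguishing colouring. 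If $R=\emptyset$ then $G=K_\omega$ is complete and one checks equality directly ($\chi_D=\omega=2\Delta-\omega+2$); so the real work is to show a \emph{strict} inequality whenever $R\neq\emptyset$.

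For $\omega\ge 3$ I would first record the structural facts that $2K_2$-freeness forces on this configuration. First, no vertex of $R$ can be complete to $K$, for otherwise it would extend $K$ to a clique of size $\omega+1$; hence every $x\in R$ misses at least one clique vertex, and by properness $x$ misses any $v_i$ sharing its colour. Second, writing $t:=\Delta-\omega+1$, each $v_i$ has at most $t$ neighbours in $R$ (its degree already spends $\omega-1$ inside $K$). Third, passing to the complement, $\overline{G}$ is $C_4$-free, so any two clique vertices have at most one common non-neighbour in $G$; equivalently the ``missed-clique'' sets $K\setminus N(x)$ for $x\in R$ form a linear set system on $K$. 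The plan is then to colour $K$ rainbow with $1,\dots,\omega$ and extend to $R$ within the palette $[\,2\Delta-\omega+2\,]$, taking care that every class of mutually twin vertices (vertices with identical neighbourhoods, in particular the ``clones'' of a $v_i$, i.e.\ the $R$-vertices complete to $K\setminus\{v_i\}$) receives pairwise distinct colours.

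To verify that such a colouring is distinguishing I would argue that any $\varphi\in\mathrm{Aut}(G,c)$ must fix $K$ pointwise and then fix $R$. The rainbow colouring forbids $\varphi$ from permuting the $v_i$ among themselves; the only danger is $\varphi$ mapping some $v_i$ to an $R$-vertex of the same colour, and this is exactly where the clone structure bites, so the colouring must be arranged (e.g.\ by never colouring a clone of $v_i$ with colour $i$, and using the max-clique/dominating property to certify that each $v_i$ alone lies in a maximum clique with $\omega-1$ rainbow-distinct neighbours) so that no $R$-vertex can impersonate a clique vertex. Once $K$ is fixed, each $R$-vertex is pinned by its (now fixed) clique-neighbourhood together with the twin-distinct colours, giving the distinguishing property. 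Finally I would run the counting: the clique uses $\omega$ colours, the twin-distinctness on $R$ is paid for by charging each twin class to the degree of a common neighbour (at most $\Delta$) and to the bound $t$ on $R$-neighbours of clique vertices, and the linear-system bound controls the non-twin remainder, the whole amount staying within $\omega+2t=2\Delta-\omega+2$ and dropping by at least one whenever $R\neq\emptyset$, which yields the strict inequality and hence equality only for complete graphs (in the $\omega\ge 3$ range) together with the balanced bipartite graphs from the $\omega=2$ case.

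The hard part will be this last bookkeeping together with the exclusion of clique/non-clique colour mixing: large twin (clone) classes force many distinct colours, and simultaneously one must rule out automorphisms that swap a clique vertex with an equally-coloured $R$-vertex in the presence of several maximum cliques. I expect the decisive leverage to come from the two degree restrictions ($\le t$ neighbours of each $v_i$ in $R$, forcing clone classes to be small and their common neighbours of bounded degree) and from the complement being $C_4$-free (making the missed-clique sets almost disjoint), which together cap the number of colours needed on $R$ by $2t$ and simultaneously supply enough structural asymmetry to keep $K$ rigid.
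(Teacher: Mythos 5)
Your opening moves are sound and match the paper's: the $\omega\le 2$ case does follow from Collins--Trenk because $C_6$ contains an induced $2K_2$, and for $\omega\ge 3$ the paper likewise takes a dominating clique of size $\omega$ from Lemma~\ref{2K2:lemma:dominatingclique} and exploits the bound $t=\Delta-\omega+1$ on the number of neighbours a clique vertex can have outside the clique. But almost everything after that is a plan rather than a proof, and what you defer is exactly where the work lies. Your anchoring is too weak in two ways. Rainbowing only $K$ does not fix $N(w)\setminus K$, and separating only classes of vertices with \emph{identical neighbourhoods} does not pin down $R$ once $K$ is fixed: two $R$-vertices with the same neighbourhood in $K$ but different neighbourhoods inside $R$ are not full twins under your rule, yet an automorphism fixing $K$ could still exchange them while permuting their $R$-neighbours. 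The paper instead rainbows all of $N[w]$ for a single maximum-degree clique vertex $w$, gives $w$ the sentinel colour $2\Delta-\omega+1$, and colours $V(G)\setminus N[w]$ greedily so that any two \emph{non-adjacent vertices agreeing on $N[w]$} receive distinct colours; that much stronger separation, charged against a common clique neighbour via the $t$-bound, is what simultaneously keeps the palette within budget and lets the ``fix $w$, then $N(w)$, then the rest'' induction close.

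Second, the impersonation danger you flag cannot simply be ``arranged away'': there are configurations in which a vertex $v\notin N[w]$ of maximum degree necessarily ends up with the sentinel colour and a neighbourhood meeting every colour of $[d_G(w)]$. The paper shows that $2K_2$-freeness then forces $N_G(v)=N_G(w)$ and $|V(G)|=2\Delta-\omega+2$, and handles this exceptional case with an entirely different colouring (two vertices of $N_G(w)$ coloured alike, all other vertices rainbow) to get $\chi_D(G)\le|V(G)|-1$. Your sketch has no counterpart to this case, and your strictness claim that the colour count ``drops by at least one whenever $R\neq\emptyset$'' is asserted, not proved --- it is precisely the delicate half of the statement. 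Finally, one of your supporting facts is false: two clique vertices can have arbitrarily many common non-neighbours (take $K_3$ and attach two independent vertices to the same clique vertex; this is $2K_2$-free), since the hypothesis only forbids two \emph{adjacent} common non-neighbours. So the ``linear set system'' leverage you hope to use on the missed-clique sets is not available.
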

\begin{proof}
Note that 
\[\chi_D(G)=|V(G)|= 2\Delta(G)-\omega(G)+2\]
if $G$ is complete or a balanced complete bipartite graph.
Hence, for the remainder of our proof, let us assume that $G$ is distinct from these graphs.
By Cavers and Seyffarth~\cite{CS}, we thus have $\chi_D(G)\leq 2\Delta(G)-1$. In view of the desired result, we may assume $\omega(G)\geq 3$.
Thus, there is a dominating clique~$W$ of size~$\omega(G)$ in~$G$ by Lemma~\ref{2K2:lemma:dominatingclique}.

We take a vertex $w\in W$ which has the highest degree among all vertices of $W$ and colour this vertex with colour $2\Delta(G)-\omega(G)+1$. Furthermore, we colour all neighbours of $w$ by pairwise different colours from~$[d_G(w)]$. As $G$ is distinct from a clique, we have $N_G(w)\setminus W\neq\emptyset$.
As~$W$ is dominating, each vertex of~$G$ which is not in $N_G[w]$ is adjacent to at least one vertex of~$W$. 
We choose a vertex ordering $v_1,v_2,\ldots,v_p$ of $V(G)\setminus N_G[w]$ and colour a vertex $v_i$ by the smallest colour neither used in $N_G(v_i)\setminus\{v_{i+1},v_{i+2},\ldots,v_p\}$ nor on any non-neighbour of $v_i$ in $\{v_1,v_2,\ldots,v_{i-1}\}$ that has the same neighbours in $N_G[w]$ as $v_i$.
Note that we obtain a proper vertex colouring~$c$ of $G$.

Let us assume that there is a vertex $v_i$ with $d_G(v_i)=d_G(w)$ that has colour $2\Delta(G)-\omega(G)+1$ and whose neighbourhood contains a vertex of every colour from $[d_G(w)]$.
Obviously, $v_i$ is non-adjacent to $w$.
Let $V$ be the set of vertices from $\{v_1,v_2,\ldots,v_{i-1}\}$ that have the same neighbours in $N_G[w]$ as $v_i$ and let $w'\in W$ be a neighbour of $v_i$. 
Note that the colour $2\Delta(G)-\omega(G)+1$ of $v_i$ is from $[|N_G(v_i)\cup V|]$.
From
\[2\Delta(G)-\omega(G)+1\leq |N_G(v_i)\cup V|\leq |N_G(v_i)|+|V| \leq d_G(v_i)+(d_G(w')-\omega(G)+1) \leq 2\Delta(G)-\omega(G)+1\]
we obtain that $v_i,w,$ and $w'$ are vertices of maximum degree and $V$ is a set of size $\Delta(G)-\omega(G)+1.$
For a contradiction, let us suppose $N_G(v_i)\neq N_G(w)$. As both neighbourhoods are of the same size, there is some vertex $x\in N_G(w)$ which is not a neighbour of $v_i$. As $N_G(v_i)$ contains a vertex of every colour from $[d_G(w)]$, there is some $y\in N_G(v_i)$ which is of the same colour as~$x$. Note that the vertices of $N_G(w)$ are coloured pairwise differently, and so $y$ is not adjacent to~$w$.
Moreover, $\{v_i,w,x,y\}$ induces a $2K_2$, a contradiction. Hence, we have $N_G(v_i)=N_G(w)$. 
Note that thus any $w\in W\setminus \{w\}$ is adjacent to all vertices of $V$ and all vertices of $W\setminus \{w\}$. But
$|V\cup (W\setminus\{w\})|=\Delta(G).$
Hence, as $W$ is dominating, we have 
\[V(G)=N_G[w]\cup V \quad \textnormal{and}\quad |V(G)|=2\Delta(G)-\omega(G)+2.\]
Thus, it suffices to prove $\chi_D(G)\leq |V(G)|-1$.
Note that $|W\setminus\{w\}|$ is of size at least $2$ and recall that $N_G(w)\setminus W\not=\emptyset$.
We take a vertex $w'\in W$ and a vertex $v\in N_G(w)\setminus W$, and colour the vertices of $G$ such that $v$ and $w'$ receive the same colour and all vertices of $V(G)-w$ receive pairwise distinct colours. Note that this colouring distinguishes $G$, and thus $\chi_D(G)\leq |V(G)|-1$.

Let us now assume that every vertex $v_i$ that receives colour $2\Delta(G)-\omega(G)+1$ has a degree different from $d_G(w)$ or there is a colour on a vertex in $N_G(w)$ that is not used for a vertex of $N_G(v_i)$.
Hence, $c$ fixes $w$. As all vertices of $N_G(w)$ are pairwise differently coloured, $c$ fixes $N_G(w)$ as well.
For the sake of contradiction, let us suppose that there is some $\varphi\in {\rm Aut}(G,c)$ and some $v_i,v_j\in V(G)\setminus N_G[w]$ such that $\varphi(v_i)=v_j$. 
Note that $v_i$ and $v_j$ have the same neighbours in $N_G[w]$ and therefore receive distinct colours by definition, a contradiction to the fact~$\varphi(v_i)=v_j$. 
Hence, our vertex colouring with at most $2\Delta(G)-\omega(G)+2$ colours is distinguishing.
\end{proof}

\section{$claw$-free graphs}

Let $G$ be a connected non-complete graph. 
A non-complete dominating module is \emph{minimal} in $G$ if it cannot be partitioned into two non-complete dominating modules of $G$.
We denote by $p(G)$ the largest integer $p$ such that $V(G)$ can be partitioned into $p$ pairwise disjoint modules $P_1,P_2,\ldots,P_{p}$ of $G$
each of which is non-complete and dominating. Note that each $P_i$ is minimal.
We further let $p(G)=0$ if $G$ is complete.

\begin{lemma}\label{claw:lemma:fixed}
If $G$ is a connected non-complete $claw$-free graph, $P$ is a minimal non-complete dominating module of $G$, $S\subseteq P$ is a set of vertices which induces a non-complete but connected graph, and $c\colon V(G[P])\to \mathbb{N}$ is a vertex colouring that fixes all vertices of $S$, then all vertices of $P$ are fixed.
\end{lemma}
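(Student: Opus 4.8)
The plan is to prove that the pointwise stabiliser $H$ of $S$ inside $\Aut(G[P],c)$ is trivial. First I would record the structural consequences of the hypotheses. Since $P$ is a dominating module, every vertex of $V(G)\setminus P$ is complete to $P$; hence, if $P\neq V(G)$, any external vertex $x$ satisfies $P\subseteq N_G(x)$, so $P$ contains no independent triple and $\alpha(G[P])\le 2$. In either case --- $P=V(G)$, where $G[P]=G$ is connected by assumption, or $P\neq V(G)$ with $\alpha(G[P])\le 2$ --- the graph $G[P]$ is connected: a disconnected claw-free graph with independence number at most $2$ is a disjoint union of exactly two cliques, and such a graph cannot contain the connected \emph{non-complete} set $S$. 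Finally, because $G[S]$ is connected and non-complete, it contains two vertices at distance two, hence an induced path on vertices $s_1,m,s_2$ with $s_1 s_2\notin E(G)$; all three lie in $S$ and are therefore fixed.

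The engine of the argument is a local statement combining claw-freeness with the properness of $c$. Let $F$ denote the set of \emph{all} vertices of $P$ fixed by $H$, so $S\subseteq F$, and put $R=P\setminus F$. I claim that for every $f\in F$ the set $N_G(f)\cap R$ is complete to $N_G(f)\cap F$. Indeed, fix $f\in F$ and a fixed neighbour $y\in N_G(f)\cap F$, and consider $X_y=\{x\in N_G(f)\cap R : xy\notin E(G)\}$. Two non-adjacent vertices of $X_y$ together with $y$ would form an independent triple inside $N_G(f)$, i.e. a claw centred at $f$; hence $X_y$ is a clique. As $c$ is proper, this clique is rainbow-coloured. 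Since every $\varphi\in H$ fixes $f$ and $y$, preserves colours, and preserves $F$ and $R$ setwise (because $H$ fixes $F$ pointwise by definition), $\varphi$ maps $X_y$ to itself and must fix it pointwise. Thus $X_y\subseteq F$, which forces $X_y=\emptyset$ and proves the claim.

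It then remains to deduce $R=\emptyset$. Assuming $R\neq\emptyset$, connectivity of $G[P]$ together with the fixed path $s_1,m,s_2$ lets the local property propagate adjacencies: any non-fixed neighbour of $m$ is joined to both $s_1$ and $s_2$, and more generally all vertices of a single $H$-orbit $O\subseteq R$ share one neighbourhood in $F$ and, via the local property applied at the fixed vertices of that common neighbourhood, are completely joined to large portions of $F$. The aim is to assemble this information, using claw-freeness once more to pin down the internal structure of $O$ (forcing it to be a clique or an independent set), into a partition of $P$ into two non-complete dominating modules, contradicting the minimality of $P$. I expect this globalisation --- converting the purely local ``complete-to'' relation between $F$ and $R$ into an honest module split that violates minimality --- to be the main obstacle, precisely because the local property constrains only the interaction between $F$ and $R$ and says nothing \emph{a priori} about the edges inside $R$; reconciling this gap with the definition of a minimal non-complete dominating module is the delicate heart of the proof.
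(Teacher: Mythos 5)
Your local claim --- that for every fixed vertex $f$ the non-fixed neighbours of $f$ are complete to the fixed neighbours of $f$ --- is correct, and its proof (the clique $X_y$ is rainbow and $H$-invariant, hence fixed pointwise, hence empty) is sound; it is the same claw-plus-rainbow-clique mechanism the paper relies on. But the argument stops exactly where the real work begins, and you say so yourself: the deduction of $R=\emptyset$ is only a plan, and the plan as stated does not go through. Two concrete problems. First, your set $F$ of \emph{all} fixed vertices need not induce a connected graph, and the local property only propagates completeness from a fixed vertex to its fixed \emph{neighbours}; so you cannot conclude that a non-fixed vertex with one neighbour in $F$ is complete to all of $F$, i.e.\ you do not obtain that $F$ is a module. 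The paper sidesteps this by working with a \emph{maximal connected} fixed set $S'\supseteq S$: connectivity of $G[S']$ is precisely what lets the claw argument walk around $S'$. You would also still have to show that this fixed set is dominating (the paper does this with the two non-adjacent fixed vertices of your induced path on $s_1,m,s_2$ together with claw-freeness), which your proposal never addresses.

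Second, and more importantly, your intended endgame --- assembling ``a partition of $P$ into two non-complete dominating modules, contradicting the minimality of $P$'' --- cannot work, because there is no contradiction to be had in general: it is entirely consistent with the hypotheses that the residual set be a non-empty \emph{clique}. Minimality of $P$ only tells you that the complementary dominating module $P\setminus S'$ is complete, not that it is empty. The missing idea is the paper's final step: once $P\setminus S'$ is known to be a clique that is complete to every other vertex of $G$ (complete to the module $S'$ because $S'$ is dominating, and complete to $V(G)\setminus P$ because $P$ is a dominating module), properness forces its vertices to carry pairwise distinct colours that occur nowhere else in $P$, so each of them is fixed after all, and $P\setminus S'$ is absorbed into the fixed set rather than contradicted out of existence. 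Without this rainbow-clique conclusion the case where the non-fixed part is complete is simply unresolved, and the lemma is not proved.
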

\begin{proof}
Let $S'\subseteq P$ be a maximal set of vertices which induces a connected graph, which contains all vertices of $S$, and whose all vertices are fixed.
We show that $S'$ is dominating and is a module in $G$. By the minimality of $P$ we thus have that $P\setminus S'$ is a non-empty clique whose vertices are coloured pairwise differently by $c$. As $P$ is a dominating module, we also have that no other vertex of $G$ receives a colour from $P\setminus S'$. Hence, $c$ fixes all vertices of $P$, which is our desired result.

We first show that $S'$ is a module by taking an arbitrary vertex $u\in V(G)\setminus S'$ with a neighbour in~$S'$ and showing that $u$ is complete to $S'$. If $u\notin P$, then $u$ is complete to $S'$ as $S'\subseteq P$ and $P$ is a dominating module. Hence, we may assume $u\in P\setminus S'$.
As $u\notin S'$, $u$ is not fixed, i.e.~there is some automorphism $\varphi\in \Aut(G,c)$ with $\varphi(u)\neq u$. Note that $u$ and $\varphi(u)$ are non-adjacent as~$c(u)=c(\varphi(u))$. Consequently, $N_G(u)\cap S' = N_G(\varphi(u))\cap S'$ as all vertices of $S'$ are fixed.
Taking an arbitrary vertex $s_1\in N_G(u)\cap S'$ and $s_2\in N_G(s_1)\cap S'$, we find that $s_2$ is in $N_G(u)\cap S'$ as $\{u,\varphi(u),s_1,s_2\}$ does not induce a $claw$. In other words, since $G[S']$ is connected, $u$ is complete to~$S'$. By the arbitrariness of $u$, we get that $N_G(S')$ is complete to~$S'$, and so $S'$ is a module in $G$.

Finally, we show that $S'$ is dominating. As $S'$ is a non-complete module, there are two non-adjacent vertices $s_1,s_2\in S$ that are complete to $N_G(S')$. As $\{s_1,s_2,u,v\}$ does not induce a $claw$ for each $u\in N_G(S')$ and each $v\in N_G(u)$ and as $G$ is connected, $S'$ is dominating. 
\end{proof}

\begin{figure}[h]
\centering
\begin{tikzpicture}
\draw (0,0) node[bnode]{}--(1,0) node[bnode]{}--(2,0) node[bnode]{}--(3,0) node[bnode]{}--(4,0) node[bnode]{}--(5,0) node[bnode]{};
\draw[bend right] (0,0) node[bnode]{} to (5,0) node[bnode]{};
\draw (0,0)-- (1.5,1) node[bnode]{} -- (1,0);
\draw (3,0)-- (1.5,1) node[bnode]{} -- (4,0);
\draw (0,0)-- (2.5,1) node[bnode]{} -- (5,0);
\draw (3,0)-- (2.5,1) node[bnode]{} -- (2,0);
\draw (2,0)-- (3.5,1) node[bnode]{} -- (1,0);
\draw (5,0)-- (3.5,1) node[bnode]{} -- (4,0);
\end{tikzpicture}
\caption{The line graph $L(K_{1,3})$}
\end{figure}
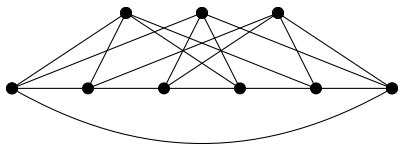

\begin{observation}
If $G\cong L(K_{1,3})$, then $\chi(G)=3$ and $\chi_D(G)=5$.
\end{observation}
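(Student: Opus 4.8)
The graph $G$ drawn in the figure is the $3\times 3$ rook's graph (the Cartesian product $K_3\,\square\,K_3$, equivalently $L(K_{3,3})$): I identify $V(G)$ with the cells $\{1,2,3\}^2$ of a grid, two cells being adjacent exactly when they share a row or a column. One checks directly that this is the unique strongly regular graph with parameters $(9,4,1,2)$, so $\Delta(G)=4$ and $\omega(G)=\alpha(G)=3$. Its maximal cliques are precisely the three rows and three columns, and these are the only two partitions of $V(G)$ into three disjoint triangles; hence every automorphism is of \emph{type~I}, $(i,j)\mapsto(\sigma(i),\pi(j))$, or of \emph{type~II}, $(i,j)\mapsto(\alpha(j),\beta(i))$, with $\sigma,\pi,\alpha,\beta\in S_3$, so $\Aut(G)\cong(S_3\times S_3)\rtimes\mathbb{Z}_2$ has order $72$. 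Since the cyclic Latin square $c(i,j)\equiv i+j\pmod 3$ is a proper $3$-colouring and $\omega(G)=3$, we get $\chi(G)=3$.

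For the upper bound $\chi_D(G)\le 5$ I would exhibit the proper $5$-colouring
\[
\begin{pmatrix} 1 & 2 & 3 \\ 2 & 1 & 4 \\ 3 & 5 & 1 \end{pmatrix}
\]
and verify it is distinguishing. Colours $4$ and $5$ occur only on $(2,3)$ and $(3,2)$, so any colour-preserving automorphism fixes these two cells. A type-I automorphism fixing $(2,3)$ and $(3,2)$ forces $\sigma,\pi$ to fix both $2$ and $3$, hence $\sigma=\pi=\mathrm{id}$; the unique type-II automorphism fixing both cells is $(i,j)\mapsto((2\,3)(j),(2\,3)(i))$, which sends $(1,2)$ of colour $2$ to $(3,1)$ of colour $3$ and so is not colour-preserving. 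Thus only the identity preserves the colouring.

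The heart of the argument, and the step I expect to be the main obstacle, is the lower bound $\chi_D(G)\ge 5$: I must show that \emph{every} proper colouring with at most four colours admits a nontrivial colour-preserving automorphism. Since $\alpha(G)=3$, every colour class has size at most $3$, so with at most four colours the multiset of class sizes is one of $\{3,3,3\}$, $\{3,2,2,2\}$, or $\{3,3,2,1\}$, and I would treat these three cases. For $\{3,3,3\}$ the colouring is a Latin square; every order-$3$ Latin square is isotopic to the cyclic one, for which $(i,j)\mapsto(i+1,j-1)\pmod 3$ preserves all three classes, and conjugating by the relating row/column permutation transfers a nontrivial stabiliser to the general case.

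For $\{3,2,2,2\}$ the unique size-$3$ class is a transversal $T$; applying an automorphism I may assume $T$ is the main diagonal, whereupon the six remaining cells induce a $C_6$, and the twelve automorphisms of $G$ fixing $T$ act faithfully on this $C_6$ as its full dihedral group $D_6$. Because $C_6$ cannot be partitioned into three distance-$2$ pairs, at least one of the three colour classes on it is an antipodal pair, and then either the antipodal rotation or the reflection through that pair preserves the colouring. For $\{3,3,2,1\}$ the two size-$3$ classes are disjoint transversals $T_1,T_2$ whose complement is a third transversal $T_3$ carrying the remaining colours in the pattern $(3,3,4)$; the automorphisms preserving this partition into transversals form a copy of $S_3$ acting as the full symmetric group on the three cells of $T_3$, so the stabiliser of the single colour-$4$ cell is a transposition that swaps the two colour-$3$ cells while fixing $T_1$ and $T_2$ setwise, a nontrivial colour-preserving automorphism. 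Combining the three cases yields $\chi_D(G)\ge 5$, and with the upper bound $\chi_D(G)=5$.
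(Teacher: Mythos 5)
Your proof is correct, but it takes a genuinely different route from the paper. The paper states this as an unproved Observation; the justification it has in mind is the one made explicit in Section 5: the graph in the figure is the line graph of $K_{3,3}$ (the label $L(K_{1,3})$ is evidently a typo, since $L(K_{1,3})\cong K_3$), Whitney's isomorphism theorem gives $\chi_D(L(H))=\chi'_D(H)$, and $\chi'_D(K_{3,3})=5$ is quoted from~\cite{KPPW}, while $\chi(L(K_{3,3}))=\chi'(K_{3,3})=3$ by K\"onig. You instead identify the graph directly as the $3\times 3$ rook's graph and give a self-contained argument: a Latin-square $3$-colouring for $\chi=3$, an explicit distinguishing $5$-colouring checked against the type~I/type~II description of $\Aut(G)$, and, for the lower bound, an exhaustive analysis of the possible colour-class size patterns $\{3,3,3\}$, $\{3,2,2,2\}$, $\{3,3,2,1\}$ (correctly the only ones, since $\alpha(G)=3$), in each case producing a nontrivial colour-preserving automorphism; I checked the key steps (the two Latin-square partitions into transversals, the faithful $D_6$ action on the off-diagonal $C_6$, the parity argument that not all three pairs can be at distance $2$, and the order-$6$ stabiliser of a transversal partition acting as $S_3$ on each transversal) and they all hold. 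The paper's route is a one-line reduction to a known edge-colouring result; yours costs more work but is elementary, verifiable without the Whitney/\cite{KPPW} machinery, and actually proves the sharpness claim rather than importing it.
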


\begin{theorem}\label{claw:theorem:main}
If $G$ is a connected $claw$-free graph, then \[\chi_D(G)\leq \chi(G)+p(G)\] unless $G\cong C_6$ or $G\cong L(K_{1,3})$.
\end{theorem}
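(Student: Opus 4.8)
The plan is to induct on $|V(G)|$, using the partition of $V(G)$ into $p=p(G)$ minimal non-complete dominating modules $P_1,\dots,P_p$ as the main structural handle. The key observation is that two disjoint modules interact in an all-or-nothing way, so since each $P_i$ is additionally dominating, any two parts must be complete to each other; hence $G=G[P_1]+\cdots+G[P_p]$ is a join, every proper colouring necessarily assigns pairwise disjoint colour sets to the parts, and $\chi(G)=\sum_i\chi(G[P_i])$. Since the target bound rewrites as $\chi(G)+p(G)=\sum_i(\chi(G[P_i])+1)$, the idea is to colour each part with its own private extra colour and combine. For $p(G)=0$ the graph is complete and $\chi_D(G)=|V(G)|=\chi(G)$, so I may assume $p(G)\ge 1$.

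First I would dispose of $p(G)\ge 2$. Each part satisfies $p(G[P_i])=1$: a splitting of some $G[P_i]$ into two non-complete dominating modules of $G[P_i]$ would, via the join structure, be a splitting into two such modules of $G$, contradicting minimality of $P_i$. Moreover claw-freeness of the join forces $\alpha(G[P_i])\le 2$ for every $i$, since a size-$3$ independent set in one part together with any vertex of another part induces a claw; consequently no $G[P_i]$ is isomorphic to $C_6$ or $L(K_{1,3})$, both of which have independence number $3$. If $G[P_i]$ is connected I apply the induction hypothesis to get a distinguishing colouring on $\chi(G[P_i])+1$ colours; if it is disconnected, then $\alpha(G[P_i])\le 2$ forces $G[P_i]\cong K_a\sqcup K_b$, which I colour directly, spending a private extra colour only when $a=b$, to break the swap of the two cliques. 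Giving the parts pairwise disjoint colour sets, disjointness prevents any automorphism from mixing parts, so the combined colouring is distinguishing and uses at most $\sum_i(\chi(G[P_i])+1)=\chi(G)+p(G)$ colours.

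This reduces everything to the core case $p(G)=1$, where $V(G)$ is itself a minimal non-complete dominating module and the goal is $\chi_D(G)\le\chi(G)+1$. Here Lemma~\ref{claw:lemma:fixed} is the engine: applied with $P=V(G)$, it shows that any proper colouring fixing some connected non-complete induced subgraph $S$ is automatically distinguishing, so it suffices to build a proper colouring on $\chi(G)+1$ colours that fixes such an $S$. My construction would start from a non-simplicial vertex $v$ (one exists because a connected non-complete graph contains an induced $P_3$), for which $N_G[v]$ is connected and non-complete. I would place the single extra colour $\chi(G)+1$ on $v$ so that $v$ is the unique vertex of that colour and hence fixed, colour $N_G(v)$ as rigidly as possible (ideally rainbow, which pins every neighbour), and extend properly to the rest of $G$ using the colours $[\chi(G)]$; once $N_G[v]$ is fixed, the Lemma propagates fixedness to all of $V(G)$.

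The main obstacle is exactly this last construction. I must guarantee a proper $(\chi(G)+1)$-colouring that simultaneously keeps the colour of $v$ globally unique and makes $G[N_G(v)]$ rigid, all on a budget of one extra colour. The neighbourhood $G[N_G(v)]$ is claw-free with $\alpha\le 2$, so its complement is triangle-free, and I expect to exploit this to colour it rigidly; but when every vertex has large degree relative to $\chi(G)$ and the graph is highly symmetric, the unique-colour/rigid-neighbourhood scheme can break down. The heart of the argument is therefore to show that the only connected claw-free graphs for which it fails for \emph{every} choice of $v$ are $C_6$ and $L(K_{1,3})$; these two are then dispatched separately, using the Observation (which gives $\chi_D(L(K_{1,3}))=5$) and the Collins--Trenk value $\chi_D(C_6)=4$, confirming that they are genuine exceptions to the bound.
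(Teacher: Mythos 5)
Your high-level skeleton matches the paper's: decompose $V(G)$ into the $p(G)$ minimal non-complete dominating modules, use the join structure to reduce to proving $\chi_D(G_i)\le\chi(G_i)+1$ for each part, and use Lemma~\ref{claw:lemma:fixed} as the propagation engine that upgrades ``some connected non-complete induced subgraph is fixed'' to ``everything is fixed.'' But the entire substance of the theorem lives in the $p(G)=1$ case, and there your argument stops at exactly the point where the work begins --- you say so yourself (``the main obstacle is exactly this last construction''). Worse, the construction you sketch cannot work as stated: colouring $N_G(v)$ rainbow requires up to $\Delta(G)$ colours, and in a claw-free graph $\Delta(G)$ can be far larger than $\chi(G)+1$ (already in a long odd antihole, or in $L(K_n)$), so ``unique colour on $v$ plus rigid rainbow neighbourhood'' blows the colour budget. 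Identifying that the failures should be exactly $C_6$ and $L(K_{1,3})$ is a statement of the theorem, not a proof of it.

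The paper's actual mechanism in the $p(G)=1$ case is genuinely different and is what you are missing. Start from an \emph{arbitrary} proper $\chi(G_i)$-colouring with classes $X_1,\dots,X_{\chi(G_i)}$. By Lemma~\ref{claw:lemma:fixed} it suffices to find a set $I$ of colour classes and a non-complete connected component $G'$ of $G[\bigcup_{i\in I}X_i]$ admitting a distinguishing colouring in $|I|+1$ colours. Since bipartite claw-free graphs are paths and cycles, if no such $G'$ exists then every bichromatic component is a $P_3$, $C_4$, or $C_6$. The proof then runs three cases: a bichromatic $C_6$ forces (after analysing how a third colour class attaches to it, always constrained by claw-freeness) either a recolouring of two vertices with colour $\chi+1$ that fixes a non-complete connected triple, or the exceptional graph $L(K_{1,3})$; a bichromatic $C_4$ is shown to be impossible by examining the trichromatic component containing it and exhibiting a $4$-colour distinguishing recolouring, contradicting minimality of the module; and if all bichromatic subgraphs have maximum degree at most $1$, recolouring a single arbitrary vertex fixes everything by induction on distance. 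None of this case analysis --- which is where claw-freeness is really exploited and where the two exceptional graphs actually arise --- appears in your proposal, so the proof is incomplete at its core.
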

\begin{proof}
For a complete graph, every proper vertex colouring is distinguishing. Hence, in view of a proof for the desired result, we may assume that $G$ is non-complete, and so $p(G)>0$.
We further assume that $G$ is distinct from a $6$-cycle.

Let $P_1,P_2,\ldots,P_{p(G)}$ be a partition of $V(G)$ into $p(G)$ pairwise disjoint non-complete dominating modules of $G$. 
Note that each $P_i$ is indeed a minimal.
We let $G_i$ be the graph $G[P_i]$.
Note that
\[\chi_D(G)=\sum_{i=1}^{p(G)} \chi_D(G_i)\quad\textnormal{and}\quad\chi(G)=\sum_{i=1}^{p(G)} \chi(G_i).\]
It suffices to show \[\chi_D(G_i)\leq \chi(G_i)+1\]
for each $i$.

Let $i\in [p(G)]$ be arbitrary and let $c\colon V(G_i)\to [\chi(G_i)]$ be a proper vertex colouring of $G_i$ with colour classes $X_1,X_2,\ldots,X_{\chi(G_i)}$.

By Lemma~\ref{claw:lemma:fixed}, we obtain $\chi_D(G_i)\leq \chi(G_i)+1$ if there is a set $I\subseteq [\chi(G_i)]$ and a component~$G'$ of $G[\bigcup_{i\in I} X_i]$ such that $G'$ is not complete and there exists a proper distinguishing vertex colouring~$c'$ of $G'$ in colours $I\cup\{\chi(G_i)+1\}$.
This fact can be seen as follows: First of all, we may assume without loss of generality that at least one vertex of $G'$ gets colour $\chi(G_i)+1$ by $c'$. We now define a new vertex colouring~$c_i$ of $G_i$ on $\chi(G_i)+1$ colours as follows:
$$c_i(v)=\begin{cases}
 c'(v) &\textnormal{if } v\in V(G'),\\
 c(v) &\textnormal{if } v\notin V(G').\\
\end{cases}
$$
Note that every automorphism $\varphi\in{\rm Aut}(G_i,c_i)$ maps $V(G')$ to $V(G')$ as this set contains the only vertices in $G_i$ of colour~$\chi(G)+1$ and induces a component of $G[\bigcup_{i\in I} X_i]$. As $c'$ is distinguishing for $G'$, we have that $c_i$ fixes all vertices of $V(G')$ in $G_i$. Hence, by Lemma~\ref{claw:lemma:fixed} completes our proof as  $V(G_i)$ is a minimal non-complete dominating module of $G$ and as $G'$ is non-complete but connected.

Hence, for every set $I\subseteq [\chi(G_i)]$ and every component~$G'$ of $G[\bigcup_{i\in I} X_i]$, we have that $G'$ is complete or every proper distinguishing vertex colouring of $G'$ uses at least $|I|+2$ colours.
As every bipartite $claw$-free graph is a path or a cycle and these graphs have distinguishing chromatic number at most~$3$ if they are distinct from a $4$-cycle or $6$-cycle, we find that every component of $G[X_i\cup X_j]$ is a $2$-path, $4$-cycle, or $6$-cycle for each distinct~$i,j\in[\chi(G)]$.

First of all, we assume that we have a bichromatic $6$-cycle $C$ in $G_i$, i.e.~there are distinct $i,j\in[\chi(G)]$ such that $C$ is a component of $G[X_i\cup X_j]$.
As $G$ is $claw$-free, we have $p(G)=1$, and so $G=G_i$. As $G$ is distinct from a $6$-cycle but connected, there is a colour class $X_k$ such that $k\neq i,j$ and $C$ is not a component of $G[X_i\cup X_j\cup X_k]$. In particular, there exist vertices in $S:=X_k\cap N_G(V(C))$. 
As a first subcase, let us assume that there is a vertex $s\in S$ such that $N_G(s)\cap V(C)$ does not induce $2K_2$.
As $G$ is $claw$-free, $s$ is not complete to $V(C)$. In other words, $s$ has a neighbour, say $u$, and a non-neighbour on $C$. As $C$ is connected, we may assume that $u^+$ is a not adjacent to $s$. As $\{u^-,u,u^+,s\}$ does not induce a $claw$, $u^-$ is adjacent to $s$.
Note that $c(u^-)=c(u^+)$ and $s$ has a neighbour of colour $c(u^+)$ while $u^+$ does not. 
Consequently, by recolouring $s$ and $u^+$ by colour $\chi(G_i)+1$, we obtain a vertex colouring of $G_i$ in which $s$ and $u^+$ are fixed.
For a contradiction, suppose that there is a vertex apart from $u$ in $V(C)$ which has neighbours $u^+$ and $s$. Clearly, this vertex has to be $u^{2+}$.
But now $\{u^+,u^{2+},u^{3+},s\}$ induces a $claw$ if $u^{3+}s\notin E(G)$, $N_G(x)\cap V(C)$ induces a $2K_2$ if $u^{3+}s\in E(G)$ but $u^{4+}s\notin E(G)$, and $\{u,u^{2+},u^{4+},s\}$ induces a $claw$ if $u^{3+}s,u^{4+}s\in E(G)$, a contradiction. 
Hence, $u$ is the only vertex of $C$ that is adjacent to $u^+$ and $s$, and so $u$ is fixed as well. Now Lemma~\ref{claw:lemma:fixed} with $P=V(G_i)$ implies that our colouring is distinguishing as $\{s,u,u^+\}$ induces a non-complete but connected graph.
As our second subcase, let us assume that $N_G(s)\cap V(C)$ induces a $2K_2$ for each $s\in S$.
We fix one $u\in V(C)$ and define
\begin{itemize}
\item $S_u:=\{s: N_G(s)\cap V(C)=\{u,u^+,u^{3+},u^{4+}\}\}$,
\item $S_{u^+}:=\{s: N_G(s)\cap V(C)=\{u^+,u^{2+},u^{4+},u^{5+}\}\}$, and 
\item $S_{u^{2+}}:=\{s: N_G(s)\cap V(C)=\{u,u^{2+},u^{3+},u^{5+}\}\}$.
\end{itemize}
Note that $S_u,S_{u^+},S_{u^2+}$ is a partition of $N_G(V(C))$ and we may assume, without loss of generality $|S_u|\geq |S_{u^+}|, |S_{u^{2+}}|$.
As $G$ is $claw$-free, each of these three sets is a clique, they are pairwise anti-complete, and each of their neighbours is a vertex on $C$ or in $N_G(V(C))$.
In particular, \[V(G)=V(C)\cup S_1\cup S_2\cup S_3.\]
Note that $\chi(G)=2+|S_u|.$
We claim $|S_u|\geq 2$ by supposing the contrary for a contradiction. 
As $G$ is distinct from $L(K_{1,3})$, we get $|S|\leq 2$.
We fix some $s\in S$ and choose $s'\in S\setminus\{s\}$ if $S\setminus\{s\}\neq \emptyset$.
Hence, there is a vertex $u'\in V(C)$ that is adjacent to $s$ but non-adjacent to $s'$ if it is chosen.
By recolouring $u'$, we obtain a distinguishing vertex colouring of $G[V(C)\cup \{s,s'\}]$. By our assumption on components of $G[X_i\cup X_j\cup X_j]$, we have that $G[V(C)\cup \{s,s'\}]$ is not a component of $G[X_i\cup X_j\cup X_k]$, a contradiction.
Hence, $|S_u|\geq 2$ as claimed.
We now choose a vertex $s\in S_u\setminus X_k$ and recolour $s$ as well as $u^{2+}$ by colour $\chi(G)+1$.
Note that $\{u,u^+,s\}$ is the only clique of size~$3$ in $G$ whose vertices are coloured by $c(X_i),c(X_j)$, and $\chi(G)+1$.
Hence, all its vertices are fixed. As an immediate consequence, all vertices of $S_u$ are fixed as $S_u\cup\{u,u^+\}$ is the only clique of size $|S_u|+2$ with $s\in S_u$. Furthermore, as $s$ and $u^{2+}$ are the only vertices of colour $\chi(G)+1$ but $s$ is fixed, $u^{2+}$ is fixed as well.
Now, our obtained colouring in $\chi(G)+1$ colours is distinguishing by Lemma~\ref{claw:lemma:fixed} as $V(G_i)$ is a minimal non-complete dominating module and $\{u, u^+,u^{2+}\}$ induces a non-complete but connected graph.

For the sake of a contradiction, we suppose next that we have a bichromatic $4$-cycle $C$ in $G_i$, i.e.~there are distinct $i,j\in[\chi(G)]$ such that $C$ is a component of $G[X_i\cup X_j]$. Let $u\in V(C)$.
We first claim that $G_i$ is connected by supposing the contrary for a contradiction. As $G$ is connected, there is a vertex $v\in V(G)\setminus V(G_i)$. But now $\{u,u^{2+},v,w\}$ induces a $claw$ for each $w\in V(G_i)$ that is in a different component as the vertices of $C$, a contradiction. Thus, $G_i$ is connected as claimed.
By the minimality of the dominating module $V(G_i)$, the set $\{u,u^{2+}\}$ is not a dominating module as otherwise $\{u,u^{2+}\}$ and $V(G_i)\setminus \{u,u^{2+}\}$ would be two dominating modules of $G_i$.
As $G$ is $claw$-free, $\{u,u^+\}$ is not a module if its is not dominating. Hence, there has to be a vertex $v\in N_G(V(C))$ with a neighbour and a non-neighbour on $V(C)$. 
Note that $v\in V(G_i)$ and $v\in X_k$ for some $k\neq i,j$.
Let~$G'$ be the component of $G[X_i\cup X_j\cup X_k]$ that contains the vertices of $V(C)$. Note that $\chi_D(G')\geq 5$ by assumption.
As $G$ is $claw$-free, $v$ has two adjacent or three neighbour on~$C$. Furthermore, from the existence of $v$, it follows that each two vertices of $X_{k}$ do not have the same neighbourhood on~$C$. 
We claim that every vertex all neighbours of $V(C)$ in $X_k$ have $3$ or $4$ neighbours on $C$ by supposing the contrary for a contradiction.
Let $v\in X_k$ be a vertex with two neighbours on~$C$. Clearly, $N_G(v)\cap V(C)$ is a clique. Recolouring $v$ and one of its non-neighbour on $C$ by colour $\chi(G_i)+1$ leads to a proper distinguishing vertex colouring of $G'$ on four colours, a contradiction.
Hence, all neighbours of $V(C)$ in $X_k$ have $3$ or $4$ neighbours on $C$ as claimed. 
As $G$ is $claw$-free, this means \[V(G')=V(C)\cup [X_k\cap N_G(V(C))].\]
As every vertex of $X_k\cap N_G(V(C))$ has at least three neighbours on $C$ and as $C$ is $claw$-free, we have $|X_k\cap N_G(V(C))|\leq 2$.
We recall that $v$ has three neighbours on $C$ and it is non-adjacent to either $u$ or $u^{2+}$, say $u$.
As recolouring $u^+$ by colour~$\chi(G)+1$ does not lead to a distinguishing vertex colouring on four colours of $G'$, it follows that $u$ has a neighbour in $X_k$ that is non-adjacent to $u^{2+}$ but adjacent to $u^-$ and $u^+$.
But now, it is easily seen that recolouring $u$ and $v$ by colour $\chi(G)+1$ leads to a proper distinguishing vertex colouring of $G'$ on $4$ colours, a contradiction.

Finally, we have to consider the case in which every vertex of $G_i$ has at most one neighbour of each colour in $c$, i.e.~$G[X_i\cup X_j]$ has maximum degree at most~$1$ for each distinct $i,j\in[\chi(G)]$. Recolouring an arbitrary vertex $v$ with colour $\chi(G_i)+1$ leads to a vertex colouring on $\chi(G_i)+1$ colours of $G_i$.
Clearly, $v$ is fixed. Furthermore, if all vertices of distance $i-1$ to $v$ are fixed, then, as each of them has at most one neighbour of a particular colour, all vertices of distance~$i$ to $v$ are fixed as well. By induction on the distance to $v$, it is now easily seen that all vertices are fixed.
\end{proof}

\begin{theorem}\label{claw:corollary:aim}
If $G$ is a connected $claw$-free graph of order~$n$, then \[\chi_D(G)\leq \Delta(G)+2\] with equality if and only if $G\cong C_6$ or ${G}\cong K_{n/2}[2K_1]$.
\end{theorem}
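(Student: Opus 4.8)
The plan is to read the bound off Theorem~\ref{claw:theorem:main} and then extract the equality cases by tracking exactly when that chain is tight. First I would dispose of the special cases directly: if $G$ is complete then $\chi_D(G)=n=\Delta(G)+1$; if $G\cong C_6$ then $\chi_D(C_6)=4=\Delta(G)+2$; and if $G\cong L(K_{1,3})$ then the Observation gives $\chi_D(G)=5$ while $\Delta=4$, so $\chi_D<\Delta+2$. For every remaining graph (connected, non-complete, claw-free, distinct from $C_6$ and $L(K_{1,3})$) Theorem~\ref{claw:theorem:main} yields $\chi_D(G)\le\chi(G)+p(G)$, so it suffices to prove $\chi(G)+p(G)\le\Delta(G)+2$.

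The structural key is that the parts $P_1,\dots,P_p$ of a partition realizing $p:=p(G)$ are pairwise completely joined: any vertex outside $P_i$ lies in another part, has a neighbour in the dominating set $P_i$, and, being outside the module $P_i$, is complete to $P_i$. Thus $G=G_1+\cdots+G_p$ with $G_i:=G[P_i]$ and $n_i:=|P_i|$, so $\chi(G)=\sum_i\chi(G_i)$ and $\Delta(G)=\max_i(\Delta(G_i)+n-n_i)$. Choosing $j$ attaining this maximum, and using $\chi(G_i)\le n_i-1$ for each non-complete $G_i$ together with $\chi(G_j)\le\Delta(G_j)+1$, I would estimate
\[\chi(G)+p=\chi(G_j)+\sum_{i\ne j}\chi(G_i)+p\le \chi(G_j)+(n-n_j)+1\le \Delta(G_j)+(n-n_j)+2=\Delta(G)+2,\]
which settles the inequality; this part I expect to be routine.

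For the equality, that $C_6$ and each $K_{n/2}[2K_1]$ (a claw-free complete multipartite graph, whence $\chi_D=n=\Delta+2$) attain the bound is immediate. Conversely, suppose $\chi_D(G)=\Delta(G)+2$ for a non-exceptional $G$. Tightness of the displayed chain forces, for the chosen $j$, that $\chi(G_i)=n_i-1$ for $i\ne j$, that $\chi(G_j)=\Delta(G_j)+1$, and (so that $\chi_D(G)=\chi(G)+p$) that $\chi_D(G_i)=\chi(G_i)+1$ for every $i$. For $i\ne j$ the first and third conditions give $\chi_D(G_i)=n_i$, so by the Collins--Trenk characterization $G_i$ is complete multipartite with a single part of size two, i.e.\ $G_i=K_{n_i-2}+2K_1$. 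When $p\ge2$ every part satisfies $\alpha(G_i)=2$ (otherwise three pairwise non-adjacent vertices of $P_i$ and any vertex of another part induce a claw); hence $G_j$ is either connected or a disjoint union of two cliques. In the connected case $\chi(G_j)=\Delta(G_j)+1$ and Brooks' theorem force an odd cycle, necessarily $C_5$ since $\alpha(G_j)=2$, which is excluded by $\chi_D(C_5)=3\ne\chi(C_5)+1$; so $G_j\cong K_a\cup K_b$, and $\chi_D(G_j)=\chi(G_j)+1$ forces $a=b$, i.e.\ $G_j\cong 2K_a$. The decisive step is to reinsert this into the maximality of $j$: the quantity $\Delta(G_i)-n_i$ equals $-a-1\le-2$ for $G_j\cong 2K_a$, but equals $-1$ for every part $K_{n_i-2}+2K_1$ with $n_i\ge3$; as $j$ maximizes $\Delta(G_i)-n_i$, no part can have $n_i\ge3$, whence every $n_i=2$, and then $-a-1\ge-2$ gives $a=1$. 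Thus all parts equal $2K_1$ and $G\cong K_{n/2}[2K_1]$. Finally, $p=1$ gives $\chi(G)=\Delta(G)+1$, so $G$ is an odd cycle by Brooks, but these satisfy $\chi_D=3<4$, so no further equality case appears.

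The hard part is this equality direction, and specifically its final manoeuvre. The three tightness conditions individually permit parts strictly larger than $2K_1$ (any $K_m+2K_1$ as a non-$j$ part, any $2K_a$ as $G_j$); only their interaction with the maximality defining $j$ collapses the graph to parts of size two. I expect the two delicate points to be the computation $\chi_D(2K_a)=a+1$ (valid only for equal cliques, which is what both eliminates unequal disjoint cliques and later forces $a=1$) and the careful bookkeeping showing that a single part with $n_i\ge3$ would make some other part, rather than $2K_a$, maximize $\Delta(G_i)-n_i$.
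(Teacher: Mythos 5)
Your proof is correct, and its treatment of the case $p(G)\ge 2$ is genuinely different from the paper's. Both arguments rest on the same structural observation that the parts $P_1,\dots,P_p$ of a maximum partition are pairwise completely joined, so that $\chi$, $\chi_D$ and $\Delta$ all decompose over the join. From there the paper proceeds by a case analysis on the parts: if some $G_i$ is connected it bootstraps the $p=1$ case of the theorem itself to get $\chi_D(G_i)\le\Delta(G_i)+1$ and hence $\chi_D(G)\le\Delta(G)+1$; otherwise every part is $3K_1$-free and disconnected, hence a disjoint union of two cliques, and elementary facts about $\chi_D(K_a\cup K_b)$ finish the job, with $K_{n/2}[2K_1]$ emerging as the only surviving configuration. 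You instead prove the bound by a single inequality chain anchored at the part $j$ maximizing $\Delta(G_i)-n_i$, and extract the equality cases by forcing tightness in each link: Collins--Trenk's characterization of $\chi_D=|V|$ pins the parts $i\ne j$ down to $K_{n_i-2}+2K_1$, Brooks plus $\alpha(G_i)\le 2$ pins $G_j$ down to $2K_a$, and the maximality of $j$ then collapses everything to $2K_1$. Your route buys a cleaner, uniform derivation of the upper bound and makes explicit exactly which slack must vanish, at the price of importing Collins--Trenk and Brooks where the paper uses only hand computations on unions of cliques. One point you leave implicit and should spell out: the step ``$\chi_D(G)=\chi(G)+p$ forces $\chi_D(G_i)=\chi(G_i)+1$ for every $i$'' needs the per-part bound $\chi_D(G_i)\le\chi(G_i)+1$ (otherwise slack $2$ in one part could compensate slack $0$ in another); this holds because each $G_i$ is either a disjoint union of two cliques, or connected with $p(G_i)=1$ (by maximality of the partition) and not isomorphic to $C_6$ or $L(K_{1,3})$ (since $\alpha(G_i)\le 2$ when $p\ge 2$), so Theorem~\ref{claw:theorem:main} applies to it. With that line added, your argument is complete.
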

\begin{proof}
We first establish the corollary for $p(G)\leq 1$. Theorem~\ref{claw:theorem:main}, Brook's theorem, and the facts $\chi(H)=3$ and $\Delta(H)=4$ imply
\[
	\chi_D(G)\leq 
	\left.
		\begin{cases}
			\chi(G)+ 2& \textnormal{if } G\cong C_6 \textnormal{ or } G\cong L(K_{1,3}),\\
			\chi(G)+ p(G)& \textnormal{otherwise}
		\end{cases}	
		\right\}
	 \leq 
		\begin{cases}
			\Delta(G)+ 2& \textnormal{if } G \textnormal{ is an odd cycle or } G\cong C_6,\\
			\Delta(G)+ 1& \textnormal{otherwise}.
		\end{cases}
\]
However, it is easily seen $\chi_D(C_{2k+1})=3$ for each $k$ and $\chi_D(C_6)=4$, which proves the corollary for~$p(G)\leq 1$.

We continue by considering $p(G)\geq 2$. Let $P_1,P_2,\ldots,P_{p(G)}$ be a partition of $V(G)$ into $p(G)$ pairwise disjoint non-complete dominating modules of $G$. 
Note that each $P_i$ is minimal.
We let $G_i$ be the graph $G[P_i]$. Since $G$ is $claw$-free, we have that $G_i$ is $3K_1$-free for each $i$. 
If $G_i$ is connected for some $i$, then $\chi_D(G_i)\leq \Delta(G_i)+1$ as the corollary holds for $p(G)=1$, and so
\[\chi_D(G)=\chi_D(G_i)+\chi_D(G-V(G_i))\leq \Delta(G_i)+1+|V(G)\setminus V(G_i)| \leq \Delta(G)+1.\]
Hence, we may assume that each $G_i$ is disconnected for each $i$. As $G_i$ is $3K_1$-free, it is a disjoint union of two cliques.
However, it is easily seen that every proper vertex colouring of $K_n\cup K_m$ is distinguishing unless $n=m$. But for $n=m$, we have $\chi_D(K_n\cup K_n)=n+1$.
If $G_i$ has maximum degree at least~$1$ for some $i$, then $\chi_D(G_i)\leq |V(G_i)|-1$, and so, for $j\neq i$,
\[\chi_D(G)=\chi_D(G_j)+\chi_D(G-V(G_j))\leq \Delta(G_j)+2+\chi_D(G_i)+|V(G)\setminus V(G_i\cup G_j)| \leq \Delta(G)+1.\]
Hence, we may assume that every $G_i$ has maximum degree~$0$. We obtain $G\cong K_{n/2}[2K_1]$. It is easily seen $\chi(K_{n/2}[2K_1])=|V(K_{n/2}[2K_1])|=\Delta(K_{n/2}[2K_1])+2$, which completes our proof.
\end{proof}

\section{($claw,diamond$)-free graphs}

In this section, we will use the concept of {\it proper distinguishing edge-colourings} introduced by Kalinowski and Pil\'sniak in ~\cite{KP}. The {\it distinguishing chromatic index} of a graph $G$, denoted by $\chi'_D(G)$,  is  the minimum number of colours in a proper edge-colouring $c$ such that each vertex of $G$ is a fixed point of every automorphism of $G$ that preserves the colouring $c$. This invariant is defined for all graphs without a component $K_2$. 

\begin{theorem} {\rm \cite{KP}} \label{distindex}
Every connected graph $H$ of order $n\ge 3$ satisfies the inequality 
$$\chi'_D(H)\le \Delta(H)+1$$
except for four graphs $C_4, K_4, C_6, K_{3,3}$. 
\end{theorem}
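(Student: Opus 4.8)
The plan is to start from Vizing's theorem, which gives a proper edge-colouring of $H$ with at most $\Delta+1$ colours, and then to isolate exactly what ``distinguishing'' costs on top of ``proper''. The decisive observation is a \emph{cascade principle}: in any proper edge-colouring $c$, each vertex is incident to at most one edge of each colour, so if $c$ is such that some vertex $r$ is fixed by every colour-preserving automorphism, then $c$ is already distinguishing. Indeed, one argues by induction on the distance to $r$: if all vertices at distance $i-1$ from $r$ are fixed and $v$ lies at distance $i$, pick a fixed neighbour $u$ of $v$; the edge $uv$ has some colour $\alpha$, and since $u$ has a unique incident edge of colour $\alpha$, any colour-preserving automorphism must send $v$ to that unique $\alpha$-neighbour of $u$, namely $v$ itself. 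Consequently the whole task reduces to the single question: \emph{can one produce, with $\Delta+1$ colours, a proper edge-colouring under which some vertex is pinned down?}

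For trees this question has a clean answer through the centre. Every automorphism of a tree $T$ preserves its centre, which is a vertex or an edge. If the centre is a vertex $z$, then $z$ is fixed by \emph{all} automorphisms of $T$, so by the cascade principle any proper $\Delta$-edge-colouring is already distinguishing. If the centre is an edge $z_1z_2$, the only symmetry that need concern us is the flip interchanging $z_1$ and $z_2$; it suffices to colour the edges at $z_1$ and $z_2$ so that these two vertices acquire different colour-palettes, which the $(\Delta+1)$-st colour always permits (the tight case being a ``symmetric double star'', where $\Delta$ colours force identical palettes and the extra colour is genuinely needed). In either case $\chi'_D(T)\le\Delta+1$, and no tree appears among the exceptions.

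For graphs containing a cycle I would again start from a Vizing colouring and then \emph{anchor} one vertex by a local recolouring that exploits the extra room a cycle provides. Concretely, I would select a vertex $v$ lying on a short cycle (or an ear) and recolour a bounded number of edges incident to or near $v$ --- spending the colour $\Delta+1$ on a single well-chosen edge --- so that the colour pattern around $v$ becomes globally unique and, in particular, asymmetric enough that no colour-preserving automorphism can displace $v$. Once $v$ is pinned, the cascade principle finishes the argument. The cycle is essential here: it breaks the rigidity of a bare tree-like neighbourhood and gives the freedom to place the marking colour without creating a mirror symmetry.

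I expect the main obstacle to be precisely this \emph{anchoring step} under high symmetry: when $H$ is vertex- and edge-transitive, every vertex looks alike and the limited palette of $\Delta+1$ colours may be unable to single one out --- a Vizing colouring of a regular, class~$1$ graph tends to be as symmetric as the graph itself, as one sees for the $1$-factorisation of $K_4$ or the standard proper colourings of $C_4,C_6,K_{3,3}$. The delicate point is to prove that for \emph{every} connected graph other than these four, one extra colour suffices to break the first layer of symmetry and anchor a vertex. The sharpness half is then a finite verification: for each of $C_4,K_4,C_6,K_{3,3}$ one checks directly that every proper $(\Delta+1)$-edge-colouring still admits a non-trivial colour-preserving automorphism, so that $\Delta+2$ colours are forced; this simultaneously certifies the list of exceptions and that the bound $\Delta+1$ is best possible elsewhere.
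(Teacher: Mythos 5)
First, a point of reference: the paper does not prove Theorem~\ref{distindex} at all; it is quoted from the cited work of Kalinowski and Pil\'sniak \cite{KP}, so there is no in-paper proof to compare yours against. Judged on its own terms, your proposal sets up the right framework but is not a proof. Your cascade principle is correct and is indeed the standard reduction: in a proper edge-colouring each vertex has at most one incident edge of any given colour, so once a single vertex is fixed by every colour-preserving automorphism, induction on distance fixes every vertex. Your treatment of trees via the centre is also sound: a central vertex is fixed by all automorphisms, so any proper $\Delta$-edge-colouring suffices, and when the centre is an edge the $(\Delta+1)$-st colour lets you give its two endpoints distinct palettes, after which the cascade applies.

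The gap is the entire non-tree case, which is the bulk of the theorem. You write that you ``would select a vertex $v$ lying on a short cycle \ldots and recolour a bounded number of edges \ldots so that the colour pattern around $v$ becomes globally unique,'' and then you yourself name the obstacle: proving that for every connected graph other than $C_4$, $K_4$, $C_6$, $K_{3,3}$ such an anchoring is achievable with $\Delta+1$ colours. But that claim, modulo the easy cascade, \emph{is} the theorem, and nothing in your text establishes it. You give no argument that the local recolouring can be kept proper, no argument that a ``globally unique'' pattern around $v$ is attainable (uniqueness of a palette is a global condition, not a local one; in a $\Delta$-regular class-1 graph every vertex sees all $\Delta$ colours in any $\Delta$-edge-colouring, so all palettes coincide and the single extra colour must be deployed with real care), and no case analysis explaining why exactly these four graphs, rather than some larger family of highly symmetric graphs, resist the construction. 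In \cite{KP} this anchoring step is the substance of the proof and requires a careful construction; your proposal replaces it with a statement of intent. The sharpness half, a finite verification for the four exceptional graphs, is unobjectionable.
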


The equality obviously holds for Class 2 graphs. Moreover, $\chi'_D(C_n)=\chi_D(C_n)$ since $L(C_n)\cong C_n$ for $n\ge 3$, that is, $\chi'_D(C_n)=4$ for $n\in\{4,6\}$, and $\chi'_D(C_n)=3$ otherwise. We also have $\chi'_D(K_4)=\chi'_D(K_{3,3})=5$ as shown in  \cite{KPPW}. 

A graph $K_4-e$ is called a {\it diamond}.

\begin{theorem}
If $G$ is a connected ($claw,\,diamond$)-free graph, then $$\chi_D(G)\le \Delta(G)+1$$  unless $G\in\{C_4,C_6\}$.
\end{theorem}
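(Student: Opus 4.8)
The plan is to reduce the statement to the distinguishing chromatic index of a triangle-free graph through the line-graph operation, and then to invoke Theorem~\ref{distindex}.

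First I would establish the structural heart of the argument: \emph{every connected $(claw,\,diamond)$-free graph $G$ is the line graph $L(H)$ of some connected triangle-free graph $H$}. The key observation is that for each vertex $v$ the graph $G[N_G(v)]$ is a disjoint union of at most two cliques. Indeed, diamond-freeness forces $G[N_G(v)]$ to be a disjoint union of cliques, since three neighbours $u,w,x$ of $v$ with $uw,wx\in E(G)$ and $ux\notin E(G)$ would make $\{v,u,w,x\}$ induce a $diamond$; and claw-freeness bounds the number of these cliques by the independence number, which is at most two. Consequently each edge of $G$ lies in a unique maximal clique, these maximal cliques partition $E(G)$, and every vertex lies in at most two of them, which is exactly a Krausz partition; hence $G$ is a line graph. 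Moreover no three of these cliques can pairwise meet in three distinct vertices, for those vertices would form a triangle whose three edges would lie in three distinct maximal cliques, contradicting uniqueness. This last point guarantees that the preimage $H$ may be taken triangle-free, and connectedness transfers from $G$ to $H$.

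Next comes the reduction. A proper edge-colouring of $H$ is literally a proper vertex-colouring of $G=L(H)$. By Whitney's theorem on line-graph automorphisms, for a connected triangle-free $H$ every automorphism of $L(H)$ is induced by an automorphism of $H$ (the exceptional graph $K_4$ is not triangle-free, and the remaining small cases are easily checked), and an automorphism of $H$ fixing every edge fixes every vertex whenever $H$ is connected with at least three vertices. Therefore a distinguishing edge-colouring of $H$, read on the vertices of $L(H)$, is a distinguishing vertex-colouring of $G$, which gives $\chi_D(G)\le\chi'_D(H)$. Applying Theorem~\ref{distindex} (with the exception $K_4$ ruled out by triangle-freeness) yields $\chi_D(G)\le\Delta(H)+1$ whenever $H\notin\{C_4,C_6,K_{3,3}\}$.

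It then remains to pass from $\Delta(H)$ to $\Delta(G)$ and to audit the leftover graphs. Since the degree of an edge $uv$ in $L(H)$ equals $d_H(u)+d_H(v)-2$, we have $\Delta(G)=\max_{uv\in E(H)}\bigl(d_H(u)+d_H(v)\bigr)-2$. If a maximum-degree vertex of $H$ has a neighbour of degree at least two, then $\Delta(G)\ge\Delta(H)$ and the bound follows; otherwise every maximum-degree vertex has only leaves as neighbours, which forces $H\cong K_{1,\Delta(H)}$ and hence $G\cong K_{\Delta(H)}$, where $\chi_D(G)=|V(G)|=\Delta(G)+1$. Finally I would dispatch the special graphs directly: $H\in\{C_4,C_6\}$ gives $G=L(H)=H$, the two genuine exceptions; $H=K_{3,3}$ gives $\Delta(G)=4$ and $\chi_D(G)=\chi'_D(K_{3,3})=5=\Delta(G)+1$, so the inequality still holds and this is \emph{not} an exception; and the cases $|V(H)|\le2$ are trivial. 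I expect the main obstacle to be the structural reduction—proving cleanly that $(claw,\,diamond)$-freeness makes $G$ the line graph of a triangle-free graph, and in particular securing the triangle-freeness of $H$—together with the careful bookkeeping needed to match the four exceptions of Theorem~\ref{distindex} against the intended exception set $\{C_4,C_6\}$.
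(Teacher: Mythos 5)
Your proposal is correct and follows essentially the same route as the paper: represent $G$ as a line graph $L(H)$, transfer the problem to the distinguishing chromatic index via the Whitney correspondence, apply Theorem~\ref{distindex}, and then compare $\Delta(G)$ with $\Delta(H)$, handling the star/complete-graph case and the graphs $C_4$, $C_6$, $K_{3,3}$ separately. The only real difference is in the preliminaries: you build the Krausz partition directly and arrange for the root graph $H$ to be triangle-free, which disposes of all the Whitney exceptional graphs at once, whereas the paper invokes Beineke's characterization and checks that the line graphs of the exceptional root graphs contain induced diamonds.
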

\begin{proof}
Every ($claw$,$diamond$)-free graph $G$ is a line graph $L(H)$ of some graph $H$. This is because every Beineke graph (cf. \cite{Bein}) different from the claw contains a diamond as an induced subgraph. Let $G=L(H)$. By the well-known Whitney isomorphism theorem, the canonical homomorphism from $\Aut(H)$ to $\Aut(L(H))$ is an isomorphism for every connected graph $H$, except for three graphs $K_2$, $K_{1,3}+e$ (a triangle with a pendant edge) and a diamond $K_4-e$ (cf. \cite{Ju}). However, both graphs $L(K_{1,3}+e)$ and $L(K_4-e)$ contain an induced diamond, contrary to the assumption.

Let then $G=L(H)$ with $H\notin\{K_2,K_{1,3}+e,K_4-e\}$. Whitney isomorphism theorem easily implies that $\chi_D(L(H))=\chi'_D(H)$. 
By Theorem~\ref{distindex}, $\chi_D(G)\le \Delta(H)+1$. It is easy to see that $\Delta(G)\ge \Delta(H)-1$ and the equality holds if and only if every edge incident to a vertex of degree $\Delta(H)$ is a pendant vertex in $H$. It follows that $H$ is a star $K_{1,n-1}$, hence $G=L(K_{1,n-1})$ is a complete graph $K_{n-1}$ with $\chi_D(K_{n-1})=\Delta(K_{n-1})+1$. To complete the proof, it suffices to note that $\chi_D(L(H))=\Delta(L(H))+1$ for $H\in\{K_4,K_{3,3}\}$, since then $\Delta(L(H))=4$ and $\chi_D(L(H))=\chi'_D(H)=5.$
\end{proof}

The difference between $\Delta(H)$ and $\Delta(L(H))$ can be arbitrarily large. 
Observe that if $G$ is a line graph of a regular graph $H$, then $\chi_D(G)\le \frac 12\Delta(G)+2$. Indeed, if $H$ is a $\Delta$-regular graph, then each vertex of $G=L(H)$ has degree $2(\Delta-1)$, and therefore $$\chi_D(G)\leq \chi'_D(H)\le \Delta +1=\frac{\Delta(G)}{2}+2.$$

\begin{theorem}
For every connected ($claw,\;diamond,\;K_k$)-free graph with $k\ge 3$ and $\Delta(G)\ge 3$,  $$\chi_D(G)\leq k.$$
\end{theorem}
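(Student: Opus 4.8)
The plan is to reduce everything to the distinguishing chromatic index of a root graph $H$. Since $G$ is $(claw,diamond)$-free, the reasoning used in the previous theorem gives $G\cong L(H)$ for some connected graph $H$, and because $G$ is diamond-free with $\Delta(G)\ge 3$ we have $H\notin\{K_2,K_{1,3}+e,K_4-e\}$; hence the Whitney isomorphism theorem applies and yields $\chi_D(G)=\chi'_D(H)$. So it suffices to bound $\chi'_D(H)$, for which Theorem~\ref{distindex} is tailor-made.

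Next I would translate the two remaining hypotheses into conditions on $H$. First, from $\Delta(L(H))=\max_{uv\in E(H)}(\deg_H u+\deg_H v-2)$ one sees that $\Delta(G)\ge 3$ forces $\Delta(H)\ge 3$. Second, the cliques of a line graph are exactly the stars (all edges at a common vertex) and the triangles of $H$, so $\omega(L(H))$ equals $\Delta(H)$ as soon as $\Delta(H)\ge 3$. Therefore the hypothesis that $G$ is $K_k$-free is equivalent to $\Delta(H)\le k-1$, and altogether $3\le\Delta(H)\le k-1$ (so in particular $k\ge 4$, while $k=3$ is vacuous).

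With these reductions in hand, Theorem~\ref{distindex} gives
\[\chi_D(G)=\chi'_D(H)\le \Delta(H)+1\le k,\]
and the whole statement follows except when $H$ is one of the four exceptional graphs $C_4,K_4,C_6,K_{3,3}$. Three of these are easily dismissed: $L(C_4)\cong C_4$ and $L(C_6)\cong C_6$ have maximum degree $2<3$ and so violate $\Delta(G)\ge 3$, while $L(K_4)\cong K_{2,2,2}$ contains an induced diamond and so lies outside our class.

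The remaining case $H\cong K_{3,3}$ is where I expect the real work to be, and it is the natural candidate for the main obstacle. Here $L(K_{3,3})$ is the $3\times 3$ rook's graph, which is genuinely $(claw,diamond)$-free with $\omega=3$ and $\Delta=4$, yet $\chi_D(L(K_{3,3}))=\chi'_D(K_{3,3})=5$. Thus the bound holds comfortably once $k\ge 5$ but is exactly violated at $k=4$, so this borderline graph has to be singled out (either as an explicit exception $G\cong L(K_{3,3})$ for $k=4$, or by the assumption forcing $k\ge 5$). Confirming that nothing else escapes the bound then amounts to observing that the degree and diamond constraints already remove every other member of the finite exception list of Theorem~\ref{distindex}.
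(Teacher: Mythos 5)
Your approach is the same as the paper's: reduce to $G\cong L(H)$ via Beineke and Whitney, translate $K_k$-freeness into $\Delta(H)\le k-1$, and invoke Theorem~\ref{distindex}. The difference is that you actually check the four exceptional graphs of Theorem~\ref{distindex}, which the paper's proof does not --- it simply asserts that $\chi'_D(H)\le k$ ``for every graph $H$''. Your suspicion about $K_{3,3}$ is justified and in fact yields a genuine counterexample to the statement as printed: $L(K_{3,3})$, the $3\times 3$ rook's graph, is connected, claw-free, diamond-free (every neighbourhood induces $2K_2$, hence contains no induced $P_3$), and $K_4$-free (its cliques come from the stars and triangles of the triangle-free, $3$-regular graph $K_{3,3}$, so $\omega=3$); it has $\Delta=4\ge 3$, yet $\chi_D(L(K_{3,3}))=\chi'_D(K_{3,3})=5>4$. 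So for $k=4$ the theorem, and likewise the corollary that follows it, fails for this one graph, and the statement needs either the hypothesis $k\ge 5$ or the explicit exception $G\cong L(K_{3,3})$. Apart from this, your treatment of the other exceptional root graphs ($C_4$ and $C_6$ eliminated by $\Delta(G)\ge 3$ since $L(C_4)\cong C_4$ and $L(C_6)\cong C_6$; $K_4$ eliminated because $L(K_4)\cong K_{2,2,2}$ contains an induced diamond) is exactly what is needed to make the paper's argument complete, and your observation that the case $k=3$ is vacuous is correct.
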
 
\begin{proof}
Let $G$ be a ($claw$,$diamond$,$K_k$)-free graph. Then $G=L(H)$ for some graph $H$ with $\Delta(H)\le k-1$ since $G$ is $K_k$-free. 
 By Theorem~\ref{distindex}, $\chi'_D(H)\le k$ for every graph $H$. 
\end{proof}

\begin{corollary}
If a graph $G$ is ($claw,\;diamond,\; K_4$)-free, then $\chi_D(G)\leq 4.$   
\end{corollary}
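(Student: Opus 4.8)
The plan is to read the corollary off the preceding theorem (its $k=4$ instance) and to patch the cases that theorem does not literally cover. Taking $k=4$ there, every connected $(claw,diamond,K_4)$-free graph with $\Delta(G)\ge 3$ already satisfies $\chi_D(G)\le 4$, so the only things needing separate attention are the low-degree range $\Delta(G)\le 2$ and the exceptional graphs lurking inside the underlying line-graph argument. Throughout I would assume $G$ connected, consistently with the rest of the section (for disconnected graphs the bound genuinely fails, e.g.\ a disjoint union of seven edges already needs five colours).

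First I would dispose of $\Delta(G)\le 2$. A connected graph of maximum degree at most $2$ is a path or a cycle. Paths are trees of maximum degree at most $2$, so $\chi_D(P_n)\le \Delta(P_n)+1\le 3$ by the tree bound of Collins and Trenk already quoted in the proof of Theorem~\ref{simplicial:theorem:chordal}. For cycles I can invoke the values recorded after Theorem~\ref{distindex}, namely $\chi_D(C_n)=4$ for $n\in\{4,6\}$ and $\chi_D(C_n)=3$ otherwise; in every case $\chi_D\le 4$. Thus all the graphs omitted by the hypothesis $\Delta(G)\ge 3$ still obey the claimed bound, with $C_4$ and $C_6$ showing it is tight.

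For $\Delta(G)\ge 3$ I would re-run the line-graph machinery of the preceding theorem: a connected $(claw,diamond)$-free graph is $L(H)$ for a connected $H\notin\{K_2,K_{1,3}+e,K_4-e\}$, whence $\chi_D(G)=\chi'_D(H)$ by Whitney's isomorphism theorem, and $K_4$-freeness of $G$ forces $\Delta(H)\le 3$ (a vertex of degree $d$ in $H$ yields a $K_d$ in $L(H)$). Theorem~\ref{distindex} then gives $\chi'_D(H)\le\Delta(H)+1\le 4$, the desired bound.

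The step I expect to be the main obstacle is controlling the four exceptional graphs $C_4,K_4,C_6,K_{3,3}$ of Theorem~\ref{distindex}, for which $\chi'_D=\Delta+2$. The cases $H=C_4,C_6$ give $L(H)\in\{C_4,C_6\}$ of maximum degree $2$, already absorbed by the low-degree analysis. The case $H=K_4$ is harmless: $L(K_4)\cong K_{2,2,2}$ contains an induced diamond and so is excluded by the hypotheses. The genuinely delicate case is $H=K_{3,3}$. Its line graph $L(K_{3,3})$ (the $3\times 3$ rook's graph) is claw-free, is $K_4$-free since $K_{3,3}$ is triangle-free and hence $\omega(L(K_{3,3}))=3$, and is \emph{diamond}-free because every edge of the rook's graph has exactly one common neighbour of its endpoints, so no two triangles share an edge. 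Yet $\chi_D(L(K_{3,3}))=\chi'_D(K_{3,3})=5$. Resolving this case is where the argument really hinges, and I would expect it to require either recording $L(K_{3,3})$ as a further exception alongside $C_4,C_6$, or supplying an extra hypothesis that rules it out; pinning down exactly how $K_{3,3}$ is meant to be excluded is the crux of a correct proof.
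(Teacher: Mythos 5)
Your route is the same as the paper's: the corollary is stated there without proof, as the $k=4$ instance of the preceding theorem, whose proof is exactly the reduction you carry out ($G=L(H)$, $K_4$-freeness forces $\Delta(H)\le 3$, then apply Theorem~\ref{distindex}); your separate treatment of connected graphs with $\Delta(G)\le 2$ (paths and cycles) supplies a case the paper leaves implicit, and is correct. The one substantive difference is that you track the exceptional graphs of Theorem~\ref{distindex}, which the paper's proof of the theorem does not: that proof asserts that $\chi'_D(H)\le k$ ``for every graph $H$'', and for $k=4$ this fails at $H=K_{3,3}$.

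The obstacle you flag at $H=K_{3,3}$ is therefore not a defect of your write-up but a genuine problem with the statement. As you verify, $L(K_{3,3})$ (the $3\times 3$ rook's graph) is connected and claw-free, is $K_4$-free since $K_{3,3}$ is triangle-free with maximum degree $3$ so that $\omega(L(K_{3,3}))=3$, and is diamond-free because every edge lies in exactly one triangle (its row or its column); yet $\chi_D(L(K_{3,3}))=\chi'_D(K_{3,3})=5$ by the Whitney argument and the value quoted after Theorem~\ref{distindex}. The other exceptions are harmless exactly as you say: $C_4$ and $C_6$ fall under the low-degree case and satisfy $\chi_D=4$, while $L(K_4)\cong K_{2,2,2}$ contains an induced diamond and is excluded by hypothesis. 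So the corollary as written admits $L(K_{3,3})$ as a counterexample; your proposal cannot be completed into a proof of the literal statement, but with the exception ``unless $G\cong L(K_{3,3})$'' added (both to the corollary and to the $k=4$ case of the theorem it rests on) your argument becomes complete and correct.
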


\end{document}